\address{Department of Mathematics, Tokyo Institute of Technology, 2-12-1 Ookayama, Meguro-ku, Tokyo, 152-8551, Japan}
\email{isoshima.t.aa@m.titech.ac.jp}
\theoremstyle{plain}
\newtheorem{thm}{Theorem}[section]
\newtheorem{prop}[thm]{Proposition}
\newtheorem{lem}[thm]{Lemma}
\newtheorem{cor}[thm]{Corollary}
\newtheorem{exam}[thm]{Example}
\newtheorem*{thm*}{Theorem}
\newtheorem*{cor*}{Corollary}
\theoremstyle{definition}
\newtheorem{dfn}[thm]{Definition}
\newtheorem{rem}[thm]{Remark}
\newtheorem{que}[thm]{Question}
\newtheorem*{que*}{Question}
\newtheorem*{con*}{Conjecture}
\begin{document}

\title{Trisections obtained by trivially regluing surface-knots}
\author{Tsukasa Isoshima}
\date{}

\begin{abstract}
Let $S$ be a $P^2$-knot which is the connected sum of a 2-knot with normal Euler number 0 and an unknotted $P^2$-knot with normal Euler number $\pm2$ in a closed 4-manifold $X$ with trisection $T_{X}$. Then, we show that the trisection of $X$ obtained by the trivial gluing relative trisections of $\overline{\nu(S)}$ and $X-\nu(S)$ is diffeomorphic to a stabilization of $T_{X}$. It should be noted that this result is not obvious since boundary-stabilizations introduced by Kim and Miller are used to construct a relative trisection of $X-\nu(S)$. As a corollary, if $X=S^4$, the resulting trisection is diffeomorphic to a stabilization of the genus 0 trisection of $S^4$. This result is related to the conjecture that is a 4-dimensional analogue of Waldhausen's theorem on Heegaard splittings. 
\end{abstract}

\maketitle

\section{Introduction}\label{sec:intro}

In 2012, Gay and Kirby \cite{GK} introduced the notion of a trisection of a 4-manifold, which is an analogue of a Heegaard splitting of a 3-manifold. A trisection of a 4-manifold with boundary is called a relative trisection. Meier and Zupan \cite{MZ2} introduced the notion of a bridge trisection of a surface-knot, which is an analogue of a bridge decomposition of a classical knot. A surface-knot can be put in a nice position in a 4-manifold, called a bridge position, such that the surface-knot is trisected according to a trisection of the 4-manifold.

Let $T=(X_1 , X_2 , X_3)$ be a trisection of a 4-manifold $X$, namely, $X=X_1 \cup X_2 \cup X_3$ and each $X_i$ is a 4-dimensional 1-handlebody. For a 2-knot $K$ in $X$ which is in 1-bridge position, the decomposition of $X-\nu(K)$ into the union of three $X_i-\nu(K)$'s is a relative trisection of $X-\nu(K)$, where $\nu(K)$ is an open tubular neighborhood of $K$. On the other hand, for a surface-knot $S$ in $X$ which is not a 2-knot, the decomposition of $X-\nu(S)$ is never a relative trisection of $X-\nu(S)$. Kim and Miller \cite{KM} introduced a new technique, called a boundary-stabilization, to change the above decomposition of $X-\nu(S)$ into a relative trisection.

We can construct a new trisection of $X=\overline{\nu(S)} \cup_{id} X-\nu(S)$ by gluing a relative trisection of $\overline{\nu(S)}$ and that of $X-\nu(S)$ constructed above using a gluing technique given by Castro and Ozbagci \cite{CO}. In this section, the new trisection is called a trisection obtained by trivially gluing $\nu(S)$ and $X-\nu(S)$. This trisection and $T$ are stably diffeomorphic (resp. stably isotopic), namely, they are diffeomorphic (resp. isotopic) after finitely many stabilizations. However, it is not obvious whether this trisection is diffeomorphic, especially isotopic, to a stabilization of $T$ since when we construct a relative trisection of $X-\nu(S)$ from the union of three $X_i-\nu(S)$'s, we use boundary-stabilizations as mentioned above. Thus, we can think about the following question.

\begin{que*}[Question \ref{que:original question}]
Let $S$ be a surface-knot in a  closed 4-manifold $X$ with trisection $T$. Is a trisection obtained by trivially gluing $\nu(S)$ and $X-\nu(S)$ diffeomorphic, especially isotopic, to a stabilization of $T$? In particular, if $X=S^4$, does this hold?
\end{que*}

The Price twist is a surgery along a $P^2$-knot $P$ in a 4-manifold $X$, which yields at most three different 4-manifolds, namely, $X$, $\Sigma_P({X})$ and a  non-simply connected 4-manifold $\tau_P({X})$. The closed 4-manifold $\Sigma_P({S^4})$ is a homotopy 4-sphere. In this paper, we call the twist having $X$ the trivial Price twist. Kim and Miller \cite{KM} constructed trisections obtained by the Price twist by attaching a relative trisection of $\overline{\nu(P)}$ obtained from its Kirby diagram to a relative trisection of $X-\nu(P)$ constructed by a boundary-stabilization. 

In this paper, we show the following theorem for Question \ref{que:original question}. Note that a trisection obtained by the trivial Price twist along $S$ corresponds to that obtained by trivially gluing a relative trisection of $\overline{\nu(S)}$ and that of $X-\nu(S)$.

\begin{thm*}[Theorem \ref{main theorem}]
Let $X$ be a closed 4-manifold and $S$ the connected sum of a 2-knot $K$ with normal Euler number 0 and an unknotted $P^2$-knot with normal Euler number $\pm2$ in $X$. Also let $T_{(X,S)}$ be a bridge trisection of $(X,S)$ and $T_X$ the underlying trisection. Suppose that $S$ is in bridge position with respect to $T_X$. Also let $T_{X}^{'}$ be the underlying trisection of the bridge trisection obtained by meridionally stabilizing $T_{(X,S)}$ so that $S$ is in 2-bridge position with respect to $T_{X}^{'}$. Then, the trisection $T_S$ obtained by the trivial Price twist along $S$ is diffeomorphic to a stabilization of $T_{X}^{'}$. In particular, the trisection $T_S$ is diffeomorphic to a stabilization of $T_{X}$.
\end{thm*}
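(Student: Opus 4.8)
The plan is to reduce the theorem to an explicit comparison of trisection diagrams in which every nontrivial feature is localized near the $P^2$-summand of $S$. Writing $S = K \# P$ with $K$ a 2-knot of normal Euler number $0$ and $P$ an unknotted $P^2$-knot of normal Euler number $\pm 2$, I would first arrange the bridge position so that $K$ and $P$ sit in disjoint regions joined by a single connected-sum tube, and then carry this separation through the stabilization from $T$ to $T'$ that places $S$ in 2-bridge position. The point of the separation is that, as recorded in the introduction, a 2-knot in bridge position already yields a genuine relative trisection of its complement; hence the $K$-summand contributes only the standard relative trisection of $S^2 \times D^2$ and forces no boundary-stabilizations, so that all of the Kim--Miller boundary-stabilizations are concentrated along $P$.

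Next I would write down the two relative trisections being glued. For $\overline{\nu(S)}$ I would use the relative trisection coming directly from its disk-bundle structure, and for the complement $Y = X - \nu(S)$ the relative trisection produced by the Kim--Miller construction, performing their prescribed boundary-stabilizations; let $b$ denote the number of boundary-stabilizations needed. Gluing these by the Castro--Ozbagci procedure produces the trisection $T_S$ of the trivial Price twist. Because $S$ is in 2-bridge position with respect to $T'$, the underlying decomposition obtained from the naive decomposition of $Y$ (which is not itself a relative trisection) together with $\overline{\nu(S)}$ reassembles, under the identity gluing, to $T'$; the entire discrepancy between $T_S$ and $T'$ is therefore carried by the $b$ boundary-stabilizations.

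The crucial step, which I would isolate as a lemma, is that a single boundary-stabilization of $Y$, once the trivial gluing to $\overline{\nu(S)}$ is performed, contributes exactly one interior stabilization of the closed trisection. To prove this I would set up a local model in a collar of the binding where the boundary-stabilization takes place, follow the resulting modification of the pages and of the gluing map, and identify the change in the trisection diagram as a standard interior trisection stabilization, i.e.\ the insertion of a cancelling pair of a $1$-handle and a $2$-handle in model position. Applying this $b$ times shows that $T_S$ is the $b$-fold interior stabilization of $T'$; in particular, destabilizing $b$ times returns $T'$.

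The main obstacle is precisely this local identification: a boundary-stabilization is an operation on the open-book structure of $\partial Y$ and on its pages, and it is not a priori clear that after gluing it has the tame effect of a single interior stabilization rather than a more complicated alteration of the three handlebodies. Controlling it requires checking that the stabilizing arc and its framing glue across $\partial \nu(S)$ to a standard cancelling handle pair, and that no residual handle slides obstruct the subsequent destabilization back to $T'$. Once this is verified, the final ``in particular'' assertion is immediate: $T'$ is by construction a stabilization of $T$, and a stabilization of a stabilization is again a stabilization, so $T_S$ is a stabilization of $T$ as well.
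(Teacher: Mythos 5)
Your proposal hangs on the lemma you yourself flag as the main obstacle---that each Kim--Miller boundary-stabilization, after the trivial gluing, contributes exactly one interior stabilization and that these carry \emph{the entire} discrepancy between $T_S$ and $T'$---and this lemma is both unproven and, as stated, inconsistent with the actual geometry. The discrepancy is not carried by the boundary-stabilizations alone: the Castro--Ozbagci gluing is performed against a genuine relative trisection of $\overline{\nu(S)}$, which is \emph{not} the naive decomposition $\nu(S)\cap X_i$ induced by $T'$ (that naive decomposition fails to be a relative trisection for the same disconnected-pages reason as on the complement side), and the gluing of arced diagrams itself creates additional genus (the glued genus is the sum of the two genera plus roughly the number of binding components minus one). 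Concretely: the Kim--Miller construction for a $P^2$-knot in 2-bridge position uses three boundary-stabilizations (one arc $C_i$ per sector), yet the paper's proof must perform \emph{six} destabilizations---after explicit handle slides---to reduce the glued diagram to the diagram of $T'$. Since the trisection genus drops by one per destabilization, $T_S$ has genus $g(T')+6$, not $g(T')+3$, so your claimed one-to-one accounting cannot be the whole story; the missing genus lives in $\mathcal{D}_{\nu(P_-)}$ and in the gluing itself.

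A second, independent gap: your argument never uses the hypothesis that $K$ has normal Euler number $0$, but the theorem needs it. In the paper's proof this hypothesis enters at the final destabilization: because $e(K)=0$, the induced structure on $X-\nu(K)$ is $0$-annular, so the monodromy of the open book on $\partial(X-\nu(K))$ is the identity, which is exactly what allows the last pair of curves ($\alpha_1$ and $\gamma_1$ in Figure \ref{fig:after fifth destabilization}) to be made parallel and cancelled. This is a global property of the $2$-knot complement, invisible in any local model in a collar of the binding, which is where your proposed lemma lives. For contrast, the paper's route is a direct diagrammatic computation: reduce to $S=K\#P_-$ by a mirror-image argument, use Kim--Miller's splitting $\mathcal{D}_{X-\nu(S)}=\mathcal{D}_{S^4-\nu(P_-)}\cup\mathcal{D}_{X-\nu(K)}$ (which does formalize your ``separation'' of the two summands), glue in $\mathcal{D}_{\nu(P_-)}$ explicitly, and then carry out handle slides and six destabilizations, invoking $e(K)=0$ at the end. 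To rescue your structural approach you would need a corrected genus accounting including the $\overline{\nu(S)}$ side and the gluing, plus an argument that kills the monodromy using $e(K)=0$---at which point you are essentially reproducing the paper's computation.
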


In the proof of Theorem \ref{main theorem}, we will perform handle slides and destabilizations many times (see also \cite{Na}).

A $P^2$-knot $S$ in $S^4$ is said to be of Kinoshita type if $S$ is the connected sum of a 2-knot and an unknotted $P^2$-knot. It is conjectured that every $P^2$-knot in $S^4$ is of Kinoshita type (see Remark \ref{rem:Kisoshita conjecture}). 

\begin{cor*}[Corollary \ref{main corollary}]
For each $P^2$-knot $S$ in $S^4$ that is of Kinoshita type, the trisection obtained by the trivial Price twist along $S$ is diffeomorphic to a stabilization of the genus 0 trisection of $S^4$.
\end{cor*}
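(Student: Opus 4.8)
The plan is to obtain the statement as a direct specialization of Theorem \ref{main theorem} to $X = S^4$ with $T$ the genus $0$ trisection. By the definition of Kinoshita type, the $P^2$-knot $S$ can be written as $S = K \# Q$, where $K$ is a $2$-knot in $S^4$ and $Q$ is an unknotted $P^2$-knot in $S^4$. The only thing to check before applying the theorem is that these two summands satisfy its normal Euler number hypotheses.

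For the $2$-knot summand, $K$ is an embedded $2$-sphere, so its normal Euler number equals the self-intersection of $[K]$; since $H_2(S^4) = 0$ forces $[K] = 0$, the normal Euler number of $K$ is $0$. For the summand $Q$, each of the two unknotted embeddings of $P^2$ in $S^4$ has normal Euler number $+2$ or $-2$, so $e(Q) = \pm 2$. Hence $S$ is precisely of the form required in Theorem \ref{main theorem}, namely the connected sum of a $2$-knot of normal Euler number $0$ and an unknotted $P^2$-knot of normal Euler number $\pm 2$.

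Finally I would arrange the bridge position hypothesis: after an isotopy, I may assume that $S$ is in bridge position with respect to the genus $0$ trisection $T$ of $S^4$. Theorem \ref{main theorem}, applied with this $X = S^4$ and $T$, then shows that the trisection $T_S$ obtained by the trivial Price twist along $S$ is isotopic to a stabilization of $T$, which is exactly the assertion of the corollary. All of the substantive work---the repeated handle slides and destabilizations needed to reduce the boundary-stabilized relative trisection to a stabilization of $T$---is already contained in the proof of Theorem \ref{main theorem}, so the only genuine step here is the identification of the normal Euler numbers above; there is no further obstacle.
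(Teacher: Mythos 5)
Your proposal is correct and takes essentially the same route as the paper: Corollary \ref{main corollary} is obtained there exactly by specializing Theorem \ref{main theorem} to $X=S^4$, where (by Remark \ref{rem:bridge trisection in $S^4$}) the trisection $T$ is the genus $0$ trisection. Your extra verification that the summands automatically satisfy the normal Euler number hypotheses ($e(K)=0$ since $[K]=0$ in $H_2(S^4)$, and $e(Q)=\pm 2$ for the unknotted $P^2$-knot, per Whitney--Massey) is left implicit in the paper but is a sound and welcome addition.
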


This implies that if any two diffeomorphic trisections of $S^4$ are isotopic, the resulting trisection gives a positive evidence to the conjecture that is a 4-dimensional analogue of Waldhausen's theorem on Heegaard splittings.

\begin{con*}[\cite{MSZ}]
Every trisection of $S^4$ is isotopic to either the genus 0 trisection or its stabilization.
\end{con*}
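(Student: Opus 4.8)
The plan is to attack the conjecture through its combinatorial avatar. Every genus-$g$ trisection of $S^4$ is recorded by a trisection diagram $(\Sigma_g;\alpha,\beta,\gamma)$ in which each pair of cut systems is a Heegaard diagram for a connected sum of copies of $S^1\times S^2$, and two trisections are isotopic exactly when their diagrams differ by handle slides performed within each individual cut system together with a diffeomorphism of $\Sigma_g$; a stabilization simply adds a standard genus-one summand to the diagram. Thus it suffices to prove that every such diagram for $S^4$ can be transformed, using handle slides and destabilizations alone, into the standard genus-$g$ diagram. I would set this up as an induction on $g$, taking the genus $0$ and $1$ cases together with the fact that genus-two trisections are standard (Meier and Zupan) as the base, and attempting to show that every diagram of genus $g\geq 3$ admits a destabilization.

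The inductive step carries the real content. First I would exploit that $\pi_1(S^4)=1$: the pairwise Heegaard diagrams present $\pi_1(X)$, so a trisection of $S^4$ yields a \emph{balanced} presentation of the trivial group, and the permissible handle slides realize precisely the Nielsen-type moves on that presentation. The target of the inductive step --- producing a destabilizing triple, i.e.\ curves $a\in\alpha$, $b\in\beta$, $c\in\gamma$ which, after handle slides, sit in the dual pattern of a stabilized summand so that one handle cancels --- translates directly into trivializing a single generator of the presentation by the corresponding moves. To organize this I would invoke the uniqueness of Heegaard splittings of $\#^{k}(S^1\times S^2)$, the three-dimensional analogue of the statement at hand, to standardize two of the three cut systems simultaneously inside their common pairwise union, reducing the problem to understanding the location of the third cut system relative to an explicit standard configuration.

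The main obstacle, and the reason the conjecture remains open, is exactly this destabilization step, which runs into the Andrews--Curtis problem. The balanced presentation of the trivial group extracted from a trisection of $S^4$ need not visibly Andrews--Curtis trivialize, handle slides implement precisely the Andrews--Curtis moves, and so a diagram assembled from a putative Andrews--Curtis counterexample would obstruct the naive reduction; conversely, excluding such diagrams appears to require either progress on Andrews--Curtis in the relevant range or a genuinely four-dimensional argument that circumvents it. This is compounded by the fact that standardizing each cut system inside one fixed pairwise union does not make the three standardizations simultaneously achievable, and reconciling them may force intermediate stabilizations. Since Gay--Kirby \cite{GK} guarantee only stable uniqueness --- any two trisections of $S^4$ become isotopic after sufficiently many stabilizations --- the entire difficulty is to \emph{bound}, or eliminate, the number of stabilizations required, which is precisely the gap separating ``stably standard'' from ``standard'' in the conjecture of \cite{MSZ}.

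As a tractable intermediate target I would first isolate the families where this obstruction is controllable, namely trisections whose associated presentation is already known to Andrews--Curtis trivialize, and develop the destabilization calculus there before confronting arbitrary diagrams. The verification in the present paper for trisections arising from trivial Price twists along Kinoshita-type $P^2$-knots (Corollary \ref{main corollary}) fits exactly this pattern, and it suggests that surface-knot constructions may furnish a systematic supply of standardizable trisections, providing both evidence for the conjecture and a proving ground for the handle slide and destabilization techniques that a general argument would ultimately need.
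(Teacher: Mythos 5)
There is a genuine gap here, but it is one your proposal itself openly concedes: the statement you were asked to prove is not a theorem of this paper at all. It is quoted verbatim from \cite{MSZ} as an open conjecture, and the paper's contribution is only the special case recorded in Corollary \ref{main corollary} (trisections arising from the trivial Price twist along Kinoshita-type $P^2$-knots in $S^4$ are standard), proved by explicit handle slides and destabilizations on a concrete glued diagram. Your proposal is a research program, not a proof: the entire content sits in the inductive step --- that every genus $g\ge 3$ trisection diagram of $S^4$ admits a destabilizing triple after handle slides --- and you supply no argument for it, correctly identifying the Andrews--Curtis-type obstruction as the reason none is known. Gay--Kirby \cite{GK} give only stable equivalence, with no bound on the number of stabilizations needed, so nothing in your setup forces a destabilization to exist; ``stably standard'' does not yield ``standard,'' which is exactly the conjecture. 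A proof attempt that names the missing step is an honest assessment of the problem, but it does not close it.

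Two secondary inaccuracies in the setup are worth flagging. First, the pairwise cut systems of a trisection diagram give Heegaard diagrams of $\#_{k_i}(S^1\times S^2)$, i.e. presentations of free groups, not directly a balanced presentation of the trivial group; extracting the group-theoretic shadow of a trisection is more delicate (one gets a trisected presentation in the sense of group trisections), and the dictionary between handle slides and Andrews--Curtis moves is looser than ``precisely.'' Second, using uniqueness of Heegaard splittings of $\#_k(S^1\times S^2)$ to standardize two cut systems simultaneously is obstructed in just the way you note: the diffeomorphism standardizing one pair disturbs the other, and reconciling them may itself cost stabilizations. Where your proposal does align with the paper is its final paragraph: the paper's actual method --- assembling $\mathcal{D}_{\nu(P_-)}$, $\mathcal{D}_{S^4-\nu(P_-)}$, and $\mathcal{D}_{X-\nu(K)}$ and then destabilizing five times plus once more down to the diagram of $T'$ --- is exactly an instance of developing the destabilization calculus on a controllable family, so your reading of the conjecture's status, and of where Corollary \ref{main corollary} sits relative to it, is correct even though no proof of the conjecture is, or could here be, given.
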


\section*{Organization}
In Section \ref{sec:preliminaries}, we review trisections, relative trisections and bridge trisections. In Section \ref{sec:Price twist}, we recall a surgery along a $P^2$-knot in a 4-manifold, called the Price twist and provide a topic related to a trisection obtained by the Price twist. In Section \ref{sec:boundary-stabilization}, we review the definition of a boundary-stabilization and the way of constructing a relative trisection of the complement of a surface-knot. Finally, in Section \ref{sec:main theorem}, we raise a question on a stabilization of a trisection obtained by the trivial regluing of a surface-knot and prove our main theorem and its corollary related to the conjecture that is a 4-dimensional analogue of Waldhausen's theorem on Heegaard splittings. 

\section*{Acknowledgement}

The author would like to thank his supervisor Hisaaki Endo for his helpful comments on this research and careful reading of this paper. He also would like to thank Maggie Miller for her helpful comments on his question and David Gay for his advice on our main theorem. 

\section{Preliminaries}\label{sec:preliminaries}

In this paper, we assume that  4-manifolds are compact, connected, oriented, and smooth unless otherwise stated and a surface-knot in a 4-manifold is a closed surface smoothly embedded in the 4-manifold.

\subsection{Trisections of 4-manifolds}
In this subsection, we review a definition and properties of trisections of closed 4-manifolds introduced in \cite{GK}. Let $g$, $k_1$, $k_2$ and $k_3$ be integers satisfying $0 \le k_1,k_2,k_3 \le g$. 

\begin{dfn}\label{def:trisection}
Let $X$ be a closed 4-manifold. A $(g;k_1,k_2,k_3)$-\textit{trisection} of $X$ is a decomposition $X=X_1 \cup X_2 \cup X_3$ into three submanifolds $X_1,X_2,X_3$ of $X$ satisfying the following conditions:
\begin{itemize}
\item For each $i=1,2,3$, there exists a diffeomorphism $\phi_i \colon X_i \to Z_{k_i}$, where $Z_{k_i} = \natural_{k_i}S^1 \times D^3$.
\item For each $i=1,2,3$, $\phi_i(X_i \cap X_{i-1}) = Y_{k_i,g}^{-}$ and  $\phi_i(X_i \cap X_{i+1}) = Y_{k_i,g}^{+}$, where $Y_{k_i,g}^{\pm}$ is the genus $g$ Heegaard splitting $\partial{Z_{k_i}} = Y_{k_i,g}^{-} \cup Y_{k_i,g}^{+}$ of $\partial{Z_{k_i}}$ obtained by stabilizing the standard genus $k_i$ Heegaard splitting of $\partial{Z_{k_i}}$ $g-k_i$ times. 
\end{itemize} 
\end{dfn}

Note that when $X$ admits a trisection $X=X_1 \cup X_2 \cup X_3$, we call the 3-tuple $T=(X_1, X_2, X_3)$ also a trisection of $X$. If $k_1=k_2=k_3=k$, the trisection is called a \textit{balanced} trisection, or a $(g,k)$-trisection; if not, it is called an \textit{unbalanced} trisection. For a $(g,k)$-trisection, since $\chi(X)=2+g-3k$, we simply call the trisection a \textit{genus} $g$ trisection. For example, the 4-sphere $S^4$ admits the $(0,0)$-trisection, namely genus 0 trisection. 

For a trisection $(X_1,X_2,X_3)$, let $H_\alpha=X_3 \cap X_1$, $H_\beta=X_1 \cap X_2$ and $H_\gamma=X_2 \cap X_3$. Then, the trisection is uniquely determined from $H_\alpha \cup H_\beta \cup H_\gamma$ \cite{LP}. The union $H_\alpha \cup H_\beta \cup H_\gamma$ is called the \textit{spine}.


Given a trisection, we can define its diagram, called a trisection diagram. Note that from the definition, we see that the triple intersection $X_1 \cap X_2 \cap X_3$ is an oriented closed surface $\Sigma_g$ of genus $g$.

\begin{dfn}
Let $\Sigma$ be a compact, connected, oriented surface, and $\delta$, $\epsilon$ collections of disjoint simple closed curves on $\Sigma$. The 3-tuples $(\Sigma, \delta, \epsilon)$ and $(\Sigma, \delta^{'}, \epsilon^{'})$ are said to be \textit{diffeomorphism and handleslide equivalent} if there exists a self diffeomorphism $h$ of $\Sigma$ such that $h(\delta)$ and $h(\epsilon)$ are related to $\delta^{'}$ and $\epsilon^{'}$ by a sequence of handleslides, respectively.
\end{dfn}

\begin{dfn}
A $(g;k_1,k_2,k_3)$-\textit{trisection diagram} is a 4-tuple $(\Sigma_g,\alpha,\beta,\gamma)$ satysfying the following conditions:
\begin{itemize}
\item $(\Sigma_g,\alpha,\beta)$ is diffeomorphism and handleslide equivalent to the standard genus $g$ Heegaard diagram of $\#_{k_1}S^1 \times S^2$.
\item $(\Sigma_g,\beta,\gamma)$ is diffeomorphism and handleslide equivalent to the standard genus $g$ Heegaard diagram of $\#_{k_2}S^1 \times S^2$.
\item $(\Sigma_g,\gamma,\alpha)$ is diffeomorphism and handleslide equivalent to the standard genus $g$ Heegaard diagram of $\#_{k_3}S^1 \times S^2$.
\end{itemize}
\end{dfn}

Figure \ref{fig:Heegaarddiagramfork_iS^1×S^2} describes the standard genus $g$ Heegaard diagram of $\#_{k_i}S^1 \times S^2$.

Note that given a trisection diagram $(\Sigma_g,\alpha,\beta,\gamma)$, $\alpha$, $\beta$ and $\gamma$ are respectively indicated by red, blue and green curves as in Figure \ref{fig:trisection diagram of CP^2}.

\begin{figure}[h]
\begin{center}
\includegraphics[width=13cm, height=7cm, keepaspectratio, scale=1]{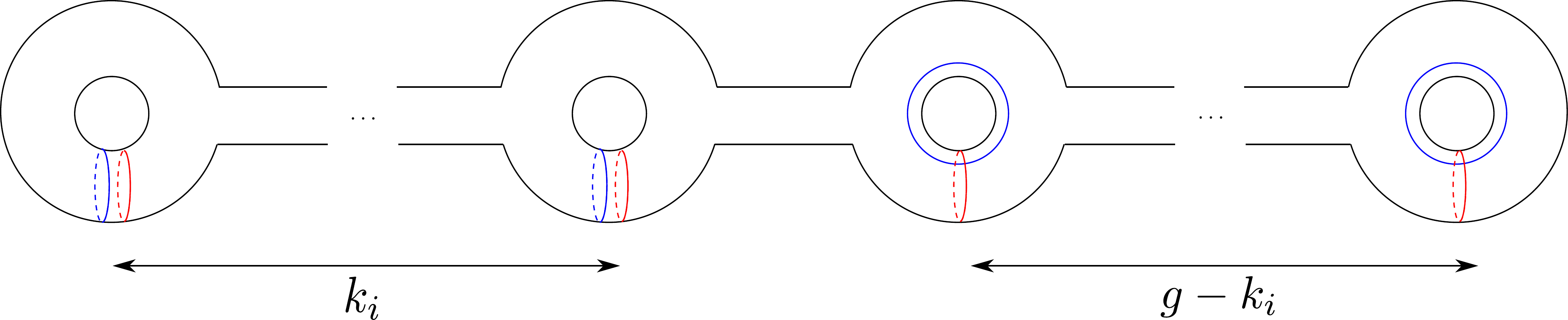}
\end{center}
\caption{The standard genus $g$ Heegaard diagram of $\#_{k_i}S^1 \times S^2$. }
\label{fig:Heegaarddiagramfork_iS^1×S^2}
\end{figure}

\begin{exam}
Figure \ref{fig:trisection diagram of CP^2}  is a $(1,0)$-trisection diagram of $\mathbb{C}P^2$ (see also Figure \ref{fig:dptd of CP^2 and CP^1}).
\end{exam}

\begin{figure}[h]
\begin{center}
\includegraphics[width=5.5cm, height=7cm, keepaspectratio, scale=1]{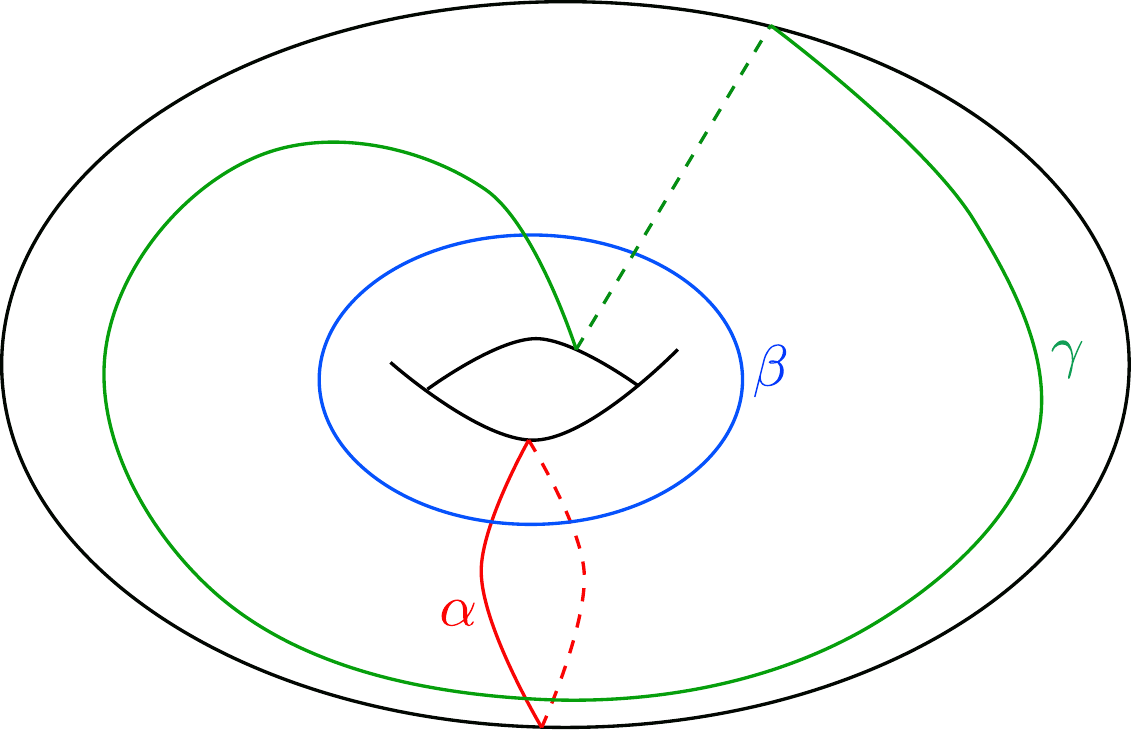}
\end{center}
\caption{A $(1,0)$-trisection diagram of $\mathbb{C}P^2$. }
\label{fig:trisection diagram of CP^2}
\end{figure}

\begin{dfn}[\cite{Is}]
Let $X$ be a closed 4-manifold, and $T=(X_1,X_2,X_3)$ and $T^{'}=(X_1^{'},X_2^{'},X_3^{'})$ trisections of $X$. We say that $T$ and $T^{'}$ are \textit{diffeomorphic} if there exists a diffeomorphism $h \colon X \to X$ such that $h(X_i)=X_i^{'}$ for each $i=1,2,3$. We say that $T$ and $T^{'}$ are \textit{isotopic} if there exists an isotopy $\{h_t\}_{t \in [0,1]}$ of $X$ such that $h_0=id$ and $h_1(X_i)=X_i^{'}$ for each $i=1,2,3$.
\end{dfn}

Note that $T$ and $T^{'}$ are diffeomorphic if and only if trisection diagrams of $T$ and $T^{'}$ are related by handle slides on the same color curves and diffeomorphisms of a surface.

As with the stabilization for a Heegaard splitting, we can define a stabilization for a trisection. 

\begin{dfn}
Let $(X_1,X_2,X_3)$ be a trisection and $C$ a boundary-parallel arc properly embedded in $X_i \cap X_j$. We define $X_i^{'}$, $X_j^{'}$, and $X_k^{'}$ as follows, where $\{i,j,k\}=\{1,2,3\}$.
\begin{itemize}
\item $X_i^{'}=X_i-\nu(C)$,
\item $X_j^{'}=X_j-\nu(C)$,
\item $X_k^{'}=X_k \cup \overline{\nu(C)}$.
\end{itemize}
The replacement of $(X_1,X_2,X_3)$ by $(X_1^{'},X_2^{'},X_3^{'})$ is said to be the $k$-\textit{stabilization}.
\end{dfn}

Note that the stabilization does not depend on the choice of an arc since any two boundary-parallel arcs in a 3-dimensional 1-handlebody are isotopic.

We can define a stabilization for a trisection using its trisection diagram.

\begin{dfn}
Let $(\Sigma,\alpha,\beta,\gamma)$ be a trisection diagram. The diagram obtainted by connect-summing $(\Sigma,\alpha,\beta,\gamma)$ with one of three diagrams depicted in Figure \ref{fig:stabilizationfordiagram} is called the \textit{stabilization} of $(\Sigma,\alpha,\beta,\gamma)$.
\end{dfn}

\begin{figure}[h]
\begin{center}
\includegraphics[width=12cm, height=7cm, keepaspectratio, scale=1]{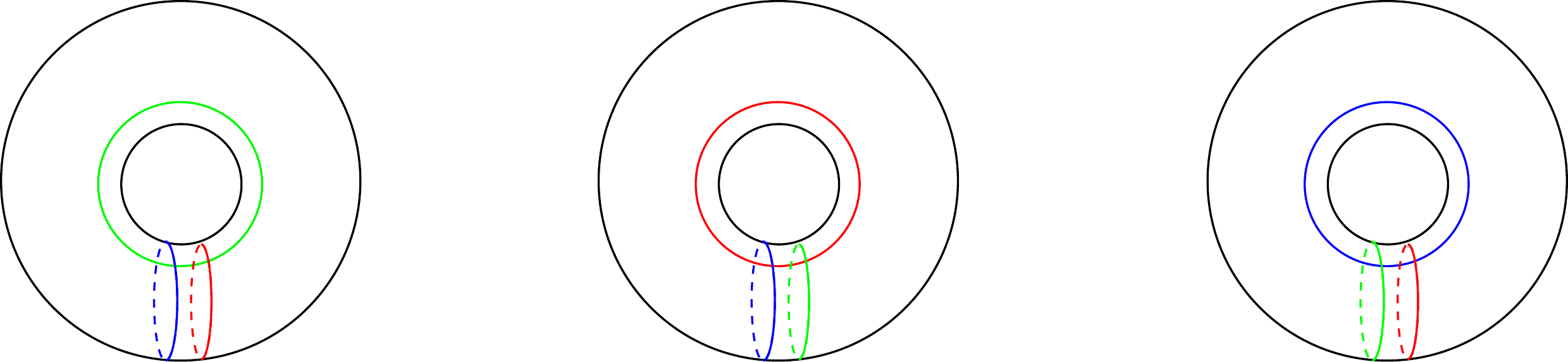}
\end{center}
\caption{The unbalanced trisection diagrams of $S^4$. }
\label{fig:stabilizationfordiagram}
\end{figure}

The diagrams in Figure \ref{fig:stabilizationfordiagram} are $(1;1,0,0),(1;0,1,0),(1;0,0,1)$-trisection diagrams of $S^4$ from left to right. Note that for a $(g;k_1,k_2,k_3)$-trisection diagram $(\Sigma,\alpha,\beta,\gamma)$, the diagram obtained by connect-summing $(\Sigma,\alpha,\beta,\gamma)$ with the leftmost (resp. middle, resp. rightmost) diagram in Figure \ref{fig:stabilizationfordiagram} is a $(g+1;k_{1}+1,k_2,k_3)$ (resp. $(g+1;k_{1},k_{2}+1,k_3)$, resp. $(g+1;k_{1},k_2,k_{3}+1)$)-trisection diagram. 
Given a trisection diagram $(\Sigma, \alpha, \beta, \gamma)$, we can define a closed 4-manifold $X(\Sigma, \alpha, \beta, \gamma)$ as follows: We attach 2-handles to $\Sigma \times D^2$ along $\alpha \times \{1\}$, $\beta \times \{e^\frac{2{\pi}i}{3}\}$, and $\gamma \times \{e^\frac{4{\pi}i}{3}\}$, where the framing of each 2-handle is the surface framing. Then, we attach 3, 4-handles. Note that the way of attaching 3, 4-handles is unique up to diffeomorphism \cite{LP}.

Gay and Kirby \cite{GK} showed that every closed 4-manifold $X$ admits a trisection with nice handle decomposition. Moreover, they showed that any two trisections of a fixed closed 4-manifold are stably isotopic. Namely, they are isotopic after finitely many stabilizations. Note that they proved it in the balanced case. In general, an $i$-stabilized trisection is not isotopic to a $j$-stabilized trisection when $i\not=j$ \cite{MSZ}. 



For more details on trisections of closed 4-manifolds, see \cite{GK}.

\subsection{Relative trisections}

In this subsection, we review trisections of 4-manifolds with boundary, called relative trisections. Before the definition, we introduce some notations.

Let $g$, $k$, $p$ and $b$ be non-negative integers with $b \ge 1$ and $g+p+b-1 \ge k \ge 2p+b-1$. Also let $\Sigma_p^b$ be a compact, connected, oriented genus $p$ surface with $b$ boundary components and $l=2p+b-1$. We define $D$, $\partial^{-}D$, $\partial^{0}D$, and $\partial^{+}D$ as follows:
\[D=\left\{(r,\theta) \ | \ r \in [0,1],\ \theta \in [-\frac{\pi}{3},\frac{\pi}{3}]\right\}, \ \partial^{-}D=\left\{(r,\theta) \ | \ r \in [0,1],\ \theta=-\frac{\pi}{3}\right\},\]
\[\partial^{0}D=\left\{(r,\theta) \ | \ r =1,\ \theta \in [-\frac{\pi}{3},\frac{\pi}{3}]\right\},\ \partial^{+}D=\left\{(r,\theta) \ | \ r \in [0,1],\ \theta=\frac{\pi}{3}\right\}.\]

Then, $\partial{D}=\partial^{-}D \cup \partial^{0}D \cup \partial^{+}D$ holds. We write $P$ for $\Sigma_p^b$ and $U$ for $D\times P$. Then, from the decomposition of $\partial{D}$, we have $\partial{U}=\partial^{-}{U} \cup \partial^{0}{U} \cup \partial^{+}{U}$, where 
\[\partial^{\pm}{U}=\partial^{\pm}{D} \times P,\ \partial^{0}{U}=P \times \partial^{0}{D} \cup \partial{P} \times D.\]

For an integer $n > 0$, let $V_n=\natural_{n}S^1 \times D^3$ and $\partial{V_{n}}=\partial^{-}{V_{n}} \cup \partial^{+}{V_{n}}$ be the standard genus $n$ Heegaard splitting of $\partial{V_{n}}$. Moreover, for an integer $s \ge n$, the Heegaard splitting of $\partial{V_{n}}$ obtained by stabilizing the standard Heegaard splitting is denoted by $\partial{V_{n}}=\partial_{s}^{-}{V_{n}} \cup \partial_{s}^{+}{V_{n}}$. Henceforth, let $n=k-2p-b+1=k-l$, $s=g-k+p+b-1$ ($V_{n}=V_{k-2p-b+1}=V_{k-l}$).

Lastly, we define $Z_k=U \natural V_n$, where the boundary sum is taken by identifying the neighborhood of a point in int($\partial^{-}U \cap \partial^{+}U$) with the neighborhood of a point in int($\partial_{s}^{-}V_{n} \cap \partial_{s}^{+}V_{n}$). Here, we define $Y_k=\partial{Z_k}=\partial{U} \# \partial{V_n}$. Then, from the above decomposition, we have $Y_{k}=Y_{g,k;p,b}^{-} \cup Y_{g,k;p,b}^{0} \cup Y_{g,k;p,b}^{+}$, where $Y_{g,k;p,b}^{\pm}=\partial^{\pm}U \natural \partial_{s}^{\pm}V_n$ and $Y_{g,k;p,b}^{0}=\partial^{0}U=P \times \partial^{0}{D} \cup \partial{P} \times D$.

Using these notations, we can define a relative trisection as follows.

\begin{dfn}\label{def: relative trisection}
Let $X$ be a 4-manifold with connected boundary. The decomposition $X=X_1 \cup X_2 \cup X_3$ of $X$ satisfying the following conditions is called a $(g,k;p,b)$-\textit{relative trisection}:
\begin{itemize}
\item For each $i=1,2,3$, there exists a diffeomorphism $\phi_i \colon X_i \to Z_k$.
\item For each $i=1,2,3$, $\phi_i(X_{i} \cap X_{i-1})=Y_{g,k;p,b}^{-}$, $\phi_i(X_{i} \cap X_{i+1})=Y_{g,k;p,b}^{+}$ and $\phi_i(X_{i} \cap \partial{X})=Y_{g,k;p,b}^{0}$, where $X_{4}=X_{1}$ and $X_{0}=X_{3}$.
\end{itemize} 
\end{dfn}

Note that this definition is that of a \textit{balanced} relative trisection. As with the definition \ref{def:trisection}, we can define an \textit{unbalanced} relative trisection. Moreover in Definition \ref{def: relative trisection}, $X_i \cap X_j \cap \partial{X} \cong \Sigma_{p}^{b}$ must be connected since $\partial{X}$ is assumed to be connected. This fact is used in Section \ref{sec:boundary-stabilization} to consider a relative trisection of the complement of a surface-knot.

Given a relative trisection, we can define a relative trisection diagram.

\begin{dfn}
A $(g,k;p,b)$-\textit{relative trisection diagram} is a 4-tuple $(\Sigma_g^b,\alpha,\beta,\gamma)$ satysfying the following conditions:
\begin{itemize}
\item $\alpha$, $\beta$ and $\gamma$ are respectively $(g-p)$-tuples of curves on $\Sigma_g^b$.
\item Each of the 3-tuples $(\Sigma_{g}^{b},\alpha,\beta)$, $(\Sigma_{g}^{b},\beta,\gamma)$, $(\Sigma_{g}^{b},\gamma, \alpha)$ is diffeomorphism and handleslide equivalent to the diagram described in Figure \ref{fig:model diagram of relative trisection}. 
\end{itemize}
\end{dfn}

\begin{figure}[h]
\centering
\includegraphics[width=11cm, height=7cm, keepaspectratio, scale=1]{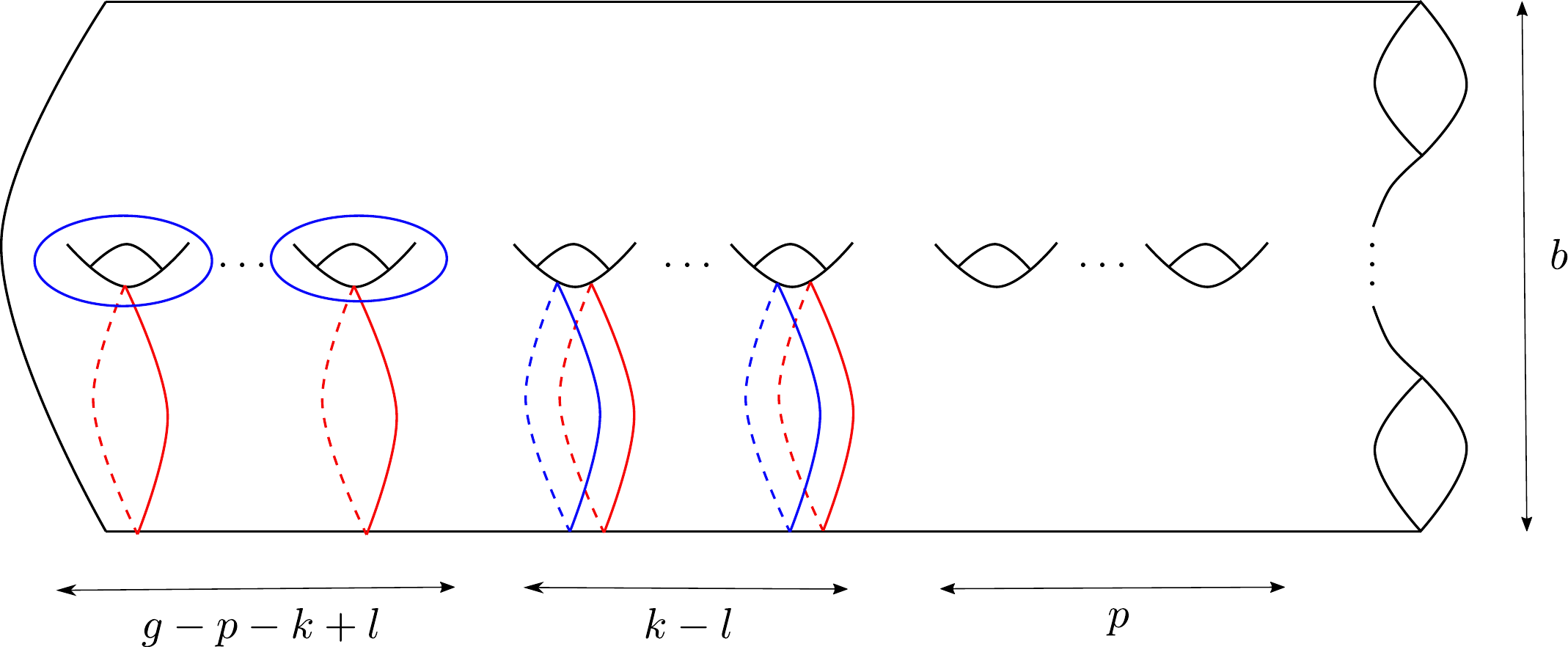}
\setlength{\captionmargin}{50pt}
\caption{The standard diagram for a relative trisection diagram. Note that $l=2p+b-1$.}
\label{fig:model diagram of relative trisection}
\end{figure}

\begin{lem}[Lemma 11 in \cite{CGPC}]\label{lem:open book decomposition}
A $(g,k;p,b)$-relative trisection of a 4-manifold $X$ with non-empty boundary induces an open book decomposition on $\partial{X}$ with page $\Sigma_p^b$ (hence binding $\partial{\Sigma_p^b}$).
\end{lem}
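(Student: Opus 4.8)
The plan is to read the open book off directly from the local model $Z_k$ and then glue the three angular pieces into a single fibration over the full circle. Recall that $Z_k = U \natural V_n$ with $U = D \times P$ and $P = \Sigma_p^b$, and that the face meeting $\partial X$ is $Y_{g,k;p,b}^{0} = \partial^0 U = (P \times \partial^0 D) \cup (\partial P \times D)$. First I would restrict each trisection diffeomorphism to the boundary, obtaining $\phi_i \colon X_i \cap \partial X \xrightarrow{\ \sim\ } \partial^0 U$, and isolate the two natural regions of $\partial^0 U$: the region $\partial P \times D$, a disjoint union of $b$ solid tori (since $\partial P$ is $b$ circles and the sector $D$ is a $2$-disk), which will serve as the neighborhood of the binding, and the region $P \times \partial^0 D \cong P \times [-\tfrac{\pi}{3}, \tfrac{\pi}{3}]$, a thickened page.

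Next I would define on $\partial^0 U$ the angular projection recording the $\theta$-coordinate: on $P \times \partial^0 D$ send $(x,(1,\theta)) \mapsto \theta$, and on $\partial P \times D$ send $(y,(r,\theta)) \mapsto \theta$; these agree on the overlap $\partial P \times \partial^0 D$. The binding contributed by the $i$-th piece is $B_i = \phi_i^{-1}(\partial P \times \{r=0\})$, and away from $B_i$ the pair $(r,\theta)$ gives genuine polar coordinates on the punctured sector $D \setminus \{0\}$, so on each solid torus the projection is exactly the standard angular map restricted to an arc. Pulling back through $\phi_i$ produces, on $X_i \cap \partial X$, a map $\pi_i$ onto an arc $I_i$ of angular length $\tfrac{2\pi}{3}$ whose fibers are copies of the page-with-collar, each diffeomorphic to $P = \Sigma_p^b$.

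The heart of the argument is the gluing. Using the boundary conditions $\phi_i(X_i \cap X_{i+1}) = Y_{g,k;p,b}^{+}$ and $\phi_{i+1}(X_i \cap X_{i+1}) = Y_{g,k;p,b}^{-}$, I would compute the triple overlap $X_i \cap X_{i+1} \cap \partial X = \phi_i^{-1}(\partial^{+}U \cap \partial^0 U)$. Since $\partial^{+}U = \{\theta = \tfrac{\pi}{3}\} \times P$, this intersection is precisely the single page $P \times \{(1,\tfrac{\pi}{3})\} \cup \partial P \times \{(r,\tfrac{\pi}{3})\}$ sitting at the end $\theta = \tfrac{\pi}{3}$ of $I_i$; symmetrically it is the page at $\theta = -\tfrac{\pi}{3}$, the end of $I_{i+1}$. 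Thus the three arcs $I_1, I_2, I_3$ concatenate along matching page fibers, their lengths summing to $2\pi$, the $B_i$ are identified to a single binding $B \cong \partial\Sigma_p^b$, and the maps $\pi_1,\pi_2,\pi_3$ assemble into a fibration $\pi \colon \partial X \setminus B \to S^1$.

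I expect the main obstacle to be exactly this assembly step: verifying that the locally defined angular maps fit together \emph{smoothly} across the gluing pages, not merely set-theoretically. This reduces to checking that the transition diffeomorphisms $\phi_{i+1}^{-1}\circ\phi_i$ carry the $\theta=\tfrac{\pi}{3}$ page of $X_i$ to the $\theta=-\tfrac{\pi}{3}$ page of $X_{i+1}$ as identifications of $P$, which is forced by the boundary compatibility built into the definition of a relative trisection, so that the resulting monodromy is a well-defined mapping class of $\Sigma_p^b$. Once this is secured, the local model near $B$ is the standard product $\partial P \times D$ with $\theta$ the disk angle, confirming that $\pi$ is a genuine open book decomposition of $\partial X$ with page $\Sigma_p^b$ and binding $\partial\Sigma_p^b$.
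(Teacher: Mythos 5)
Your proposal is correct and is essentially the argument behind the paper's cited source: the paper gives no proof of this lemma (it quotes Lemma 11 of \cite{CGPC}), and the proof there, like yours, reads the open book directly off the model $\partial^{0}U = (P \times \partial^{0}D) \cup (\partial P \times D)$ via the angular coordinate, identifies $\partial P \times \{r=0\}$ as the binding, and concatenates the three $2\pi/3$-sectors into a fibration of $\partial X \setminus B$ over $S^1$ with page $\Sigma_p^b$. The one point needing care — which you correctly flag — is that the interface pages $X_i \cap X_{i+1} \cap \partial X$ match up (forced by $\phi_i(X_i \cap X_{i+1}) = Y^{+}_{g,k;p,b}$, $\phi_{i+1}(X_i \cap X_{i+1}) = Y^{-}_{g,k;p,b}$ intersected with $Y^{0}_{g,k;p,b}$) and that the glued map can be smoothed there, which is the standard corner-rounding bookkeeping.
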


If we want to glue several relative trisection diagrams, we must describe a diagram with arcs, called an \textit{arced relative trisection diagram}. There exists an algorithm for drawing such arcs. 


\begin{lem}[Lemma 2.7 in \cite{CO}]\label{lem:gluing}
For $i=1,2$, let $X_i$ be a 4-manifold with nomempty and connected boundary, and $T_i$ a relative trisection of $X_i$. Also let $\mathcal{O}X_i$ be the open book decomposition on $\partial{X_i}$ induced by $T_i$. If $f \colon \partial{X_1} \to \partial{X_2}$ is an orientation reversing diffeomorphism which takes $\mathcal{O}X_1$ to $\mathcal{O}X_2$, then we obtain a trisection of $X=X_1 \cup_{f} X_2$ by gluing $T_1$ and $T_2$.
\end{lem}

Note that if there exists a diffeomorphism $f$ as above, the page of $\mathcal{O}X_1$ is diffeomorphic to the page of $\mathcal{O}X_2$ via $f$. Thus, if $T_i$ is the $(g_i,k_i;p_i,b_i)$-relative trisection, then $p_1=p_2$ and $b_1=b_2$.

Let $(\Sigma(i),\alpha(i),\beta(i),\gamma(i),a(i),b(i),c(i))$ be an arced relative trisection diagram of $X_i$. If there exsits $f$ in Lem \ref{lem:gluing}, we can obtain three kinds of new simple closed curves in $\Sigma(1) \cup_{f} \Sigma(2)$, i.e. $a(1) \cup a(2)$, $b(1) \cup b(2)$ and $c(1) \cup c(2)$ via $f$. Thus, we have the following proposition, where $\Sigma=\Sigma(1) \cup_{f} \Sigma(2)$ and $\tilde{\alpha}$ (resp. $\tilde{\beta}$, resp. $\tilde{\gamma}$) $=(a(1)_j \cup_{\partial} a(2)_j)_j$ (resp. $(b(1)_j \cup_{\partial} b(2)_j)_j$, resp. $(c(1)_j \cup_{\partial} c(2)_j)_j$. 

\begin{prop}[Proposition 2.12 in \cite{CO}]
In addition to the assumptions in Lem \ref{lem:gluing}, let $(\Sigma(i),\alpha(i),\beta(i),\gamma(i),a(i),b(i),c(i))$ be an arced relative trisection diagram of $X_i$. Then, the 4-tuple $(\Sigma, \alpha, \beta, \gamma)$ is a trisection diagram of $X$, where $\alpha=\alpha(1) \cup \alpha(2) \cup \tilde{\alpha}$.
\end{prop}

\begin{prop}[Theorem 5 in \cite{CGPC}]
Let $(\Sigma,\alpha,\beta,\gamma)$ be a $(g,k;p,b)$ relative trisection diagram and $\Sigma_{\alpha}$ the surface obtained by performing the surgery along $\alpha$. Suppose that this operation comes with an embedding $\phi_{\alpha} \colon \Sigma-\alpha \to \Sigma_{\alpha}$. Consider the following step. 
\begin{enumerate}
\item Choose a collection of arcs $a$ such that $a$ is disjoint from $\alpha$ in $\Sigma$ and $\phi_{\alpha}(a)$ cuts $\Sigma_{\alpha}$ into a disk. Note that $a$ consists of $2p+b-1$ arcs. 
\item Choose $b$ by handle sliding $a$ over $\alpha$ so that $b$ is disjoint from $\beta$. If necessary, we slide $\beta_i$ over $\beta_j$. In this case, the $\beta$ is denoted by $\beta{'}$. If handle slides are not needed, $\beta{'}=\beta$.
\item Choose $c$ by handle sliding $b$ over $\beta{'}$ so that $c$ is disjoint from $\gamma$. If necessary, we slide $\gamma_i$ over $\gamma_j$. In this case, the $\gamma$ is denoted by $\gamma{'}$. If handle slides are not needed, $\gamma{'}=\gamma$.
\end{enumerate}
Then, $(\Sigma, \alpha, \beta{'}, \gamma{'}, a, b, c)$ is an arced relative trisection diagram.
\end{prop}

\begin{exam}
Figure \ref{fig:D^2 bundle over S^2} is a $(2,1;0,2)$-relative trisection diagram of the $D^2$ bundle over $S^2$ with Euler number $-1$ and its arced relative trisection diagram constructed from the algorithm.
\end{exam}

\begin{figure}[h]
\centering
\includegraphics[width=8.5cm, height=6.5cm, keepaspectratio, scale=1]{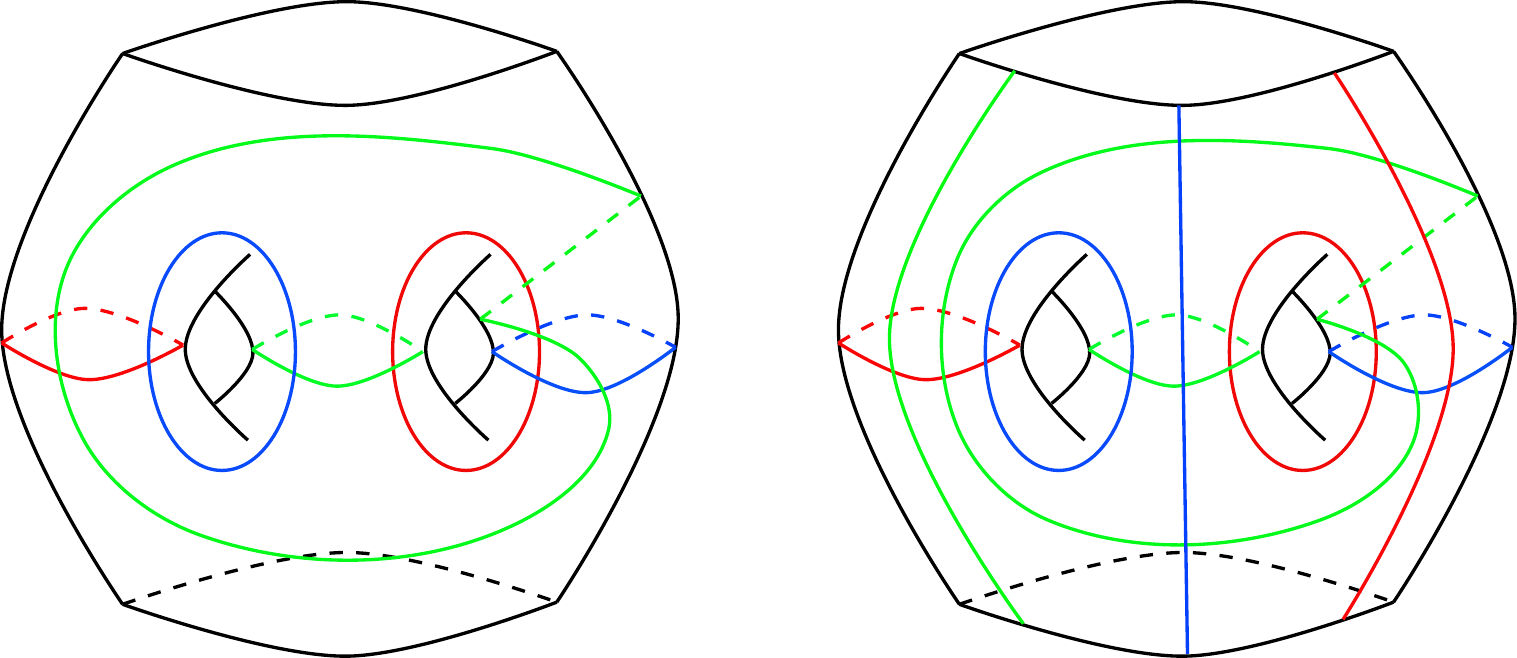}
\setlength{\captionmargin}{50pt}
\caption{(Left) A $(2,1;0,2)$-relative trisection diagram of the $D^2$ bundle over $S^2$ with Euler number $-1$. (Right) Its arced relative trisection diagram.}
\label{fig:D^2 bundle over S^2}
\end{figure}

For more details on relative trisections, see \cite{Ca, CGPC, CO}.

\subsection{Bridge trisections}

In this subsection, we review trisections of surface-knots, called bridge trisections. 

\begin{dfn}
Let $V$ be a 4-dimensional 1-handlebody and $\mathcal{D}$ a collection of disks properly embedded in $V$. We say that $\mathcal{D}$ is \textit{trivial} if the disks of $\mathcal{D}$ are simultaneously isotoped into $\partial{V}$.
\end{dfn}

\begin{dfn}\label{def:trivial tangle}
Let $H$ be a 3-dimensional 1-handlebody and $\tau=\{\tau_i\}$ a collection of arcs properly embedded in $H$. We say that $\tau$ is \textit{trivial} if $\tau_i$ is isotoped into $\partial{H}$ for each $i$. Or equivalently, there exists a collection $\Delta=\{\Delta_i\}$ of disks in $H$ with $\Delta_i \cap \Delta_j =\emptyset$ such that $\partial{\Delta_i} = \tau_i \cup \tau_i^{'}$ for some arc $\tau_i^{'} \subset \partial{H}$. We call $\tau$, $\Delta$ and $\tau_i^{'}$ \textit{trivial tangles}, \textit{bridge disks} and a \textit{shadow} of $\tau_i$ respectively.
\end{dfn}

\begin{dfn}[\cite{MZ2}]\label{def:bridge trisection}
Let $X=X_1 \cup X_2 \cup X_3$ be a $(g;k_1,k_2,k_3)$-trisection of a closed 4-manifold $X$, and $S$ a surface-knot in $X$. A decomposition $(X,S) = (X_1,\mathcal{D}_1) \cup (X_2,\mathcal{D}_2) \cup (X_3,\mathcal{D}_3)$ is a $(g;k_1,k_2,k_3;b;c_1,c_2,c_3)$-\textit{bridge trisection} of $(X,S)$ if
\begin{itemize}
\item For each $i=1,2,3$, $\mathcal{D}_i$ is a collection of trivial $c_i$ disks in $X_i$.
\item For $i \not= j$, $\mathcal{D}_i \cap \mathcal{D}_j$ form trivial $b$ tangles in $X_i \cap X_j$.
\end{itemize}
We say that $S$ is in $(b;c_1,c_2,c_3)$-\textit{bridge position} with respect to $(X_1,X_2,X_3)$ if $(X,S) = (X_1,S \cap X_1) \cup (X_2,S \cap X_2) \cup (X_3,S \cap X_3)$ is a $(g;k_1,k_2,k_3;b;c_1,c_2,c_3)$-bridge trisection.
\end{dfn}

We call the trisection $(X_1,X_2,X_3)$ the \textit{underlying trisection} of the bridge trisection.

\begin{rem}\label{rem:bridge trisection in $S^4$}
In Definition \ref{def:bridge trisection}, if $X=S^4$, then the trisection is the $(0,0)$-trisection \cite[Definition 1.2]{MZ1}.
\end{rem}

As with a balanced trisection, when $k_1=k_2=k_3=k$ and $c_1=c_2=c_3=c$, we say that the decomposition of $(X,S)$ is a $(g,k;b,c)$-\textit{bridge trisection} and $S$ is in $(b,c)$-\textit{bridge position}. Note that if $S$ is in $(b;c_1,c_2,c_3)$-bridge position, then $\chi(S)=c_1+c_2+c_3-b$. So, when $c_1=c_2=c_3$, we often say that $S$ is in $b$-bridge position.

Meier and Zupan \cite{MZ2} showed that every pair of a 4-manifold $X$ and a surface-knot $S$ in $X$ admits a bridge trisection, using a technical operation called \textit{meridional stabilization}. 

\begin{dfn}
Let $(X,S)=(X_1, \mathcal{D}_1) \cup (X_2, \mathcal{D}_2) \cup (X_3, \mathcal{D}_3)$ be a bridge trisection and $C$ an arc in $\mathcal{D}_i \cap \mathcal{D}_j$ whose endpoints are in distinct components of $\mathcal{D}_k$. We define $(X_\ell^{'},\mathcal{D}_\ell^{'})$ as follows, where $\{i,j,k\}=\{1,2,3\}$.
\begin{itemize}
\item $(X_i^{'},\mathcal{D}_i^{'})=(X_i - \nu(C), \mathcal{D}_i - \nu(C))$ 
\item $(X_j^{'},\mathcal{D}_j^{'})=(X_j - \nu(C), \mathcal{D}_j - \nu(C))$ 
\item $(X_k^{'},\mathcal{D}_k^{'})=(X_k \cup \overline{\nu(C)}, \mathcal{D}_k \cup (\overline{\nu(C)} \cap S)$
\end{itemize}
The replacement of $(X_\ell,\mathcal{D}_\ell)$ by $(X_\ell^{'},\mathcal{D}_\ell^{'})$ for all $\ell$ is said to be a $k$-meridionally stabilization.
\end{dfn}

Note that when we meridionally stabilize a bridge trisection of $(X,S)$, for the underlying trisection of $X$, we simply stabilize it. This observation is used in the proof of our main theorem. 

\begin{thm}[Theorem 2 in \cite{MZ2}]\label{thm:good bridge position}
Let $S$ be a surface-link in a closed 4-manifold $X$ with a $(g,k)$-trisection $T$. Then, the pair $(X,S)$ admits a $(g,k;b,n)$-bridge trisection with $b=3n-\chi(S)$, where $n$ is the number of connected components of $S$. 
\end{thm}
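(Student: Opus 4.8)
The plan is to build the bridge trisection in two stages: first isotope $S$ into \emph{some} bridge position with respect to $T$ by a general-position argument, and then normalize that position until each of the three sectors meets $S$ in exactly $n$ trivial disks. The numerical conclusion is then free: once $c_1=c_2=c_3=n$, the relation $\chi(S)=c_1+c_2+c_3-b$ recorded just after Definition \ref{def:bridge trisection} gives $b=3n-\chi(S)$. So the entire content lies in producing the balanced, efficient position.

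For the first stage I would encode $T$ by a Morse $2$-function $G\colon X\to\mathbb{R}^2$ whose central fiber is $\Sigma_g=X_1\cap X_2\cap X_3$, whose three singular rays have preimages the handlebodies $H_{ij}=X_i\cap X_j$, and whose three sectors have preimages the pieces $X_i\cong Z_{k_i}$. After a small isotopy I may assume that $S$ is transverse to every stratum, so that $S\cap\Sigma_g$ is a finite set of points, each $S\cap H_{ij}$ is a disjoint union of arcs and circles, and each $S\cap X_i$ is a compact surface with boundary. Closed components of $S\cap H_{ij}$ can be pushed off by an innermost-circle isotopy, and Morse theory on $S$ relative to the radial coordinate of $G$ lets me rearrange the critical points so that each $S\cap X_i$ becomes a union of disks and each $S\cap H_{ij}$ a tangle. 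This already displays $S$ in a (generally unbalanced and inefficient) bridge position.

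The crux is the second stage, where \emph{disks} must become \emph{trivial disks} and the patch counts must be driven down to the minimum $c_i=n$. Triviality of a patch means boundary-parallelism in $Z_{k_i}$, which is exactly what can fail for the disks produced by general position. The remedy is to perturb --- which leaves $T$ unchanged and alters the bridge data in a $\chi$-preserving way --- and, when perturbation is not enough, to apply a meridional stabilization. As recorded in the paragraph before Theorem \ref{thm:good bridge position}, a meridional stabilization of the pair amounts to an ordinary stabilization of the ambient trisection; one therefore works up to stabilization of $T$, using the extra handle both to isotope each patch to be boundary-parallel and to merge or cancel superfluous patches, until every component of $S$ meets each sector in a single trivial disk.

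The step I expect to be the main obstacle is precisely this triviality-and-minimality normalization. General position is cheap and produces disks and tangles for free, but controlling the isotopy class of the disk systems, and certifying that finitely many perturbations and meridional stabilizations suffice to reach the balanced efficient state $c_1=c_2=c_3=n$, is where the real work lies; the clean formula $b=3n-\chi(S)$ is only the bookkeeping that this normalization makes available.
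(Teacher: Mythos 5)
First, a caveat: the paper does not prove this statement at all; it is quoted as Theorem 2 of \cite{MZ2}, so your argument can only be measured against Meier--Zupan's actual proof. Measured that way, your outline has the right global shape (first reach some bridge position, then use meridional stabilization to achieve the count $c_i=n$, with $b=3n-\chi(S)$ falling out of $\chi(S)=c_1+c_2+c_3-b$), but both of your stages contain genuine gaps. In stage one, general position plus ``rearranging critical points of the radial coordinate'' does not produce a bridge position. By Definition \ref{def:bridge trisection}, bridge position requires each $S\cap X_i$ to be a system of \emph{trivial} disks and each $S\cap (X_i\cap X_j)$ to be a \emph{trivial} tangle, and these boundary-parallelism conditions cannot be achieved by Morse-theoretic rearrangement of a generic position; that is precisely the hard content of the theorem, not a preliminary step. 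Meier and Zupan do not argue this way: they isotope $S$ into a normal form (a banded-unlink/movie presentation in the sense of Kawauchi--Shibuya--Suzuki) adapted to a handle decomposition of $X$ compatible with the trisection, so that the pieces are trivial by construction, and only then redistribute them by perturbation.

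Second, your stage two is circular. Perturbation and meridional stabilization are moves defined \emph{on} bridge trisections: both presuppose that every patch is already a trivial disk and every tangle already trivial. Neither move can be used ``to isotope each patch to be boundary-parallel''; if some $S\cap X_i$ contains a non-boundary-parallel disk, you are not yet in a position where these operations exist. The legitimate role of meridional stabilization is only the bookkeeping step you describe last: merging patches to drive each $c_i$ down to $n$, stabilizing the ambient trisection once per move. On that point your reading is correct, and necessarily so: efficiency cannot in general be achieved with respect to the original $T$ (a knotted $2$-knot in $1$-bridge position with respect to the genus-$0$ trisection of $S^4$ would be unknotted), which is why the paper's remark following Theorem \ref{thm:good bridge position} speaks of ``a trisection obtained by stabilizing $T$'' even though the quoted parameters $(g,k;b,n)$ appear to fix $(g,k)$. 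So your skeleton matches the known proof, but the step you yourself flag as ``the main obstacle'' --- certifying triviality --- is exactly the step your tools cannot perform, and it must instead be supplied up front by the normal-form argument.
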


Note that in Theorem \ref{thm:good bridge position}, if $S$ is a 2-knot, then $S$ can be in 1-bridge position with respect to a trisection obtained by stabilizing $T$. Furthermore if $S$ is a $P^2$-knot, then $S$ can be in 2-bridge position.


A surface-knot in $S^4$ can be described by a triplane diagram introduced by Meier and Zupan \cite{MZ1}. On the other hand, it is difficult to describe a surface-knot in a general 4-manifold in the same way. Therefore, Meier and Zupan \cite{MZ2} developed another diagram using shadows in Definition \ref{def:trivial tangle}. It is called a shadow diagram. 

\begin{dfn}
Let $(X,S)=(X_1,\mathcal{D}_1) \cup (X_2,\mathcal{D}_2) \cup (X_3,\mathcal{D}_3)$ be a bridge trisection. A 4-tuple $(\Sigma, (\alpha, a), (\beta, b), (\gamma, c))$ is called a \textit{shadow diagram} if the 4-tuple $(\Sigma,\alpha,\beta,\gamma)$ is a trisection diagram of $(X_1,X_2,X_3)$, and $a$, $b$ and $c$ are shadows of $\mathcal{D}_1 \cap \mathcal{D}_2$, $\mathcal{D}_2 \cap \mathcal{D}_3$ and $\mathcal{D}_3 \cap \mathcal{D}_1$ respectively.  In particular, $a$, $b$ and $c$ are a shadow of $\mathcal{D}_i \cap \mathcal{D}_j$, the shadow diagram is called a \textit{doubly pointed trisection diagram}.
\end{dfn}

Each 2-knot in a close 4-manifold admits a doubly pointed trisection diagram since it can be put in 1-bridge position. Note that for a 2-knot $K$ in 1-bridge position with respect to a trisection $T$ of $X$, the underlying trisection diagram of $(X,K)$ is the diagram of $T$. For example, Figure \ref{fig:dptd of CP^2 and CP^1} describes a doubly pointed trisection diagram of $(\mathbb{C}P^2, \mathbb{C}P^1)$. We call the two black points of a doubly pointed trisection diagram \textit{base points} in the proof of our main theorem.

For more details on bridge trisections, see \cite{MZ1, MZ2}.

\begin{figure}[h]
\begin{center}
\includegraphics[width=5.5cm, height=6cm, keepaspectratio, scale=1]{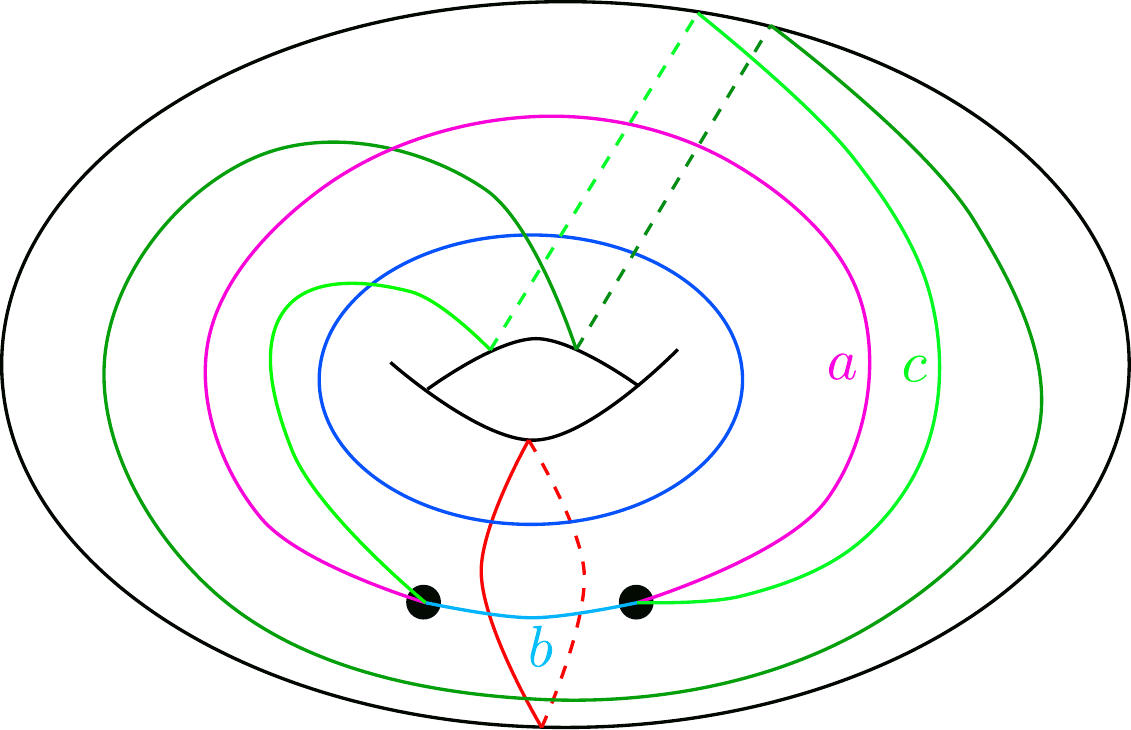}
\end{center}
\setlength{\captionmargin}{50pt}
\caption{A doubly pointed trisection diagram of $(\mathbb{C}P^2,\mathbb{C}P^1)$. The red, blue, and green curves describe a $(1,1)$-trisection diagram of $\mathbb{C}P^2$ and the arcs $a$, $b$, and $c$ describe $\mathbb{C}P^1$. Note that in a doubly pointed trisection diagram, we do not need to draw the arcs since there is a unique way to describe them.}
\label{fig:dptd of CP^2 and CP^1}
\end{figure}

\section{The Price twist}\label{sec:Price twist}

In this section, we review a surgery along a $P^2$-knot in a closed 4-manifold, called the Price twist.

Let $S$ be a $P^2$-knot, that is, a real projective plane smoothly embedded in a closed 4-manifold $X$, with normal Euler number $e(S)=\pm2$. Note that when $X=S^4$, from Whitney-Massey's theorem \cite{Ma, Wh}, each $P^2$-knot $S$ satisfies $e(S)=\pm2$. Then, for a tubular neighborhood $\nu(S)$ of $S$ in $X$, the boundary $\partial{\nu(S)}$ is a Seifert-fibered space $Q$ over $S^2$ with three singular fibers labeled $S_0$, $S_1$ and $S_{-1}$, where these indices are respectively $\pm2$, $\pm2$ and $\mp2$ when $e(S)=\pm2$. Since $\partial(X-\nu(S)) \cong Q$, $\partial(X-\nu(S))$ has the same label with $\partial{\nu(S)}$. Price \cite{Pr} showed that there exist three kinds of  self-homeomorphism of  $\partial{\nu(S)}$ up to isotopy, that is, $S_{-1} \mapsto S_{-1}$, $S_{-1} \mapsto S_{0}$ and $S_{-1} \mapsto S_{1}$. Thus, when we reglue $\nu(S)$ deleted from $X$ according to $\phi \colon \partial \nu(S) \to \partial(X-\nu(S))$, we can obtain the following at most (see below) three 4-manifolds up to diffeomorphism (the notation follows \cite{KM}):
\begin{itemize}
\item If $\phi(S_{-1})=S_{-1}$, the resulting manifold is $X$.
\item If $\phi(S_{-1})=S_{0}$, the resulting manifold is denoted by $\tau_S(X)$. 
\item If $\phi(S_{-1})=S_{1}$, the resulting manifold is denoted by $\Sigma_S(X)$.
\end{itemize}

This operation is called the \textit{Price twist} of $X$ along $S$. Especially, in this paper, we call the first twist, that is, the twist having the original manifold $X$, the \textit{trivial Price twist}. Note that $\Sigma_S(S^4)$ is a homotopy 4-sphere. Let $\Sigma_K^G(X)$ be the 4-manifold obtained by the Gluck twist along $K$, where $K$ is a 2-knot in $X$. Then, from \cite{KSTY}, we see that for a $P^2$-knot $S=K\#P_{\pm}$, $\Sigma_S(X) \cong \Sigma_K^G(X)$ holds, where $P_{\pm}$ is an unknotted $P^2$-knot with normal Euler number $\pm2$ in $X$. So, for a 2-knot $K$ satisfying $\Sigma_K^G(S^4) \cong S^4$ such as a twist spun 2-knot,  we have $\Sigma_S(S^4) \cong S^4$. Thus, we can ask whether the conjecture that is a 4-dimensional analogue of Waldhausen's theorem on Heegaard splittings \cite[Conjecture 3.11]{MSZ} holds for such $\Sigma_S(S^4)$ (\cite[Question 6.2]{KM}). Note that \cite[Question 6.2]{KM} is a specific case of \cite[Conjecture 3.11]{MSZ}.

\begin{que*}[Question 6.2 in \cite{KM}]\label{KM's Question}
Let $S$ be a $P^2$-knot in $S^4$ so that $\Sigma_S(S^4) \cong S^4$. Is a trisection of  $\Sigma_S(S^4)$ obtained from the algorithm of Section 5 in \cite{KM} isotopic to a stabilization of the genus 0 trisection of $S^4$?
\end{que*}

\begin{con*}[Conjecture 3.11 in \cite{MSZ}]
Every trisection of $S^4$ is isotopic to either the genus 0 trisection or its stabilization.
\end{con*}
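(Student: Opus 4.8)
The plan is to work entirely at the level of trisection diagrams. By the corollary of Gay and Kirby \cite{GK} quoted above, a trisection of $S^4$ is encoded by a diagram $(\Sigma_g,\alpha,\beta,\gamma)$ with $X(\Sigma_g,\alpha,\beta,\gamma)\cong S^4$, and two such diagrams describe isotopic trisections precisely when they are related by diffeomorphisms of $\Sigma_g$, handleslides among same-color curves, and (de)stabilizations. The target is the genus $0$ diagram, so it suffices to show that every trisection diagram of $S^4$ reduces, under these moves, either to the empty diagram on $\Sigma_0$ or to a stabilization of it. I would proceed by induction on the genus $g$, the cases $g=0,1$ being the standard diagrams and the remaining low-genus cases being treatable by direct enumeration of the possible curve systems.

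For the inductive step, the key input is that each of the three pairwise sub-diagrams $(\Sigma_g,\alpha,\beta)$, $(\Sigma_g,\beta,\gamma)$, $(\Sigma_g,\gamma,\alpha)$ is a genus $g$ Heegaard diagram of a connected sum $\#_{k_i}S^1\times S^2$. By Waldhausen's theorem in dimension three each such Heegaard splitting is standard, so after handleslides and a diffeomorphism any \emph{single} pair, say $(\alpha,\beta)$, may be put in standard position. One would then try to locate a \emph{destabilizing triple}: a genus-one summand on which the three cut systems simultaneously match one of the stabilization diagrams of Figure \ref{fig:stabilizationfordiagram}, permitting a destabilization and a drop in genus. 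The homological constraints forced by $H_*(S^4)=H_*(\mathrm{pt})$ — concretely, that the intersection data of $\alpha$, $\beta$, $\gamma$ must assemble into a unimodular, trivial gluing — are the facts I would exploit to argue that such a summand can always be produced.

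The hard part, and the reason the statement remains a conjecture rather than a theorem, is precisely the \textbf{coordination problem}: standardizing one pair, say $(\alpha,\beta)$, via handleslides generically destroys any normal form previously achieved for $(\beta,\gamma)$ and $(\gamma,\alpha)$, and there is at present no algorithm that standardizes all three cut systems at once, nor any a priori bound on how many stabilizations must be performed before a destabilization becomes available. Unlike the genuine three-dimensional setting, where Waldhausen's argument supplies a monotone reduction, here the handleslide moves on different color classes do not commute in any controlled way, so a high-genus diagram of $S^4$ could in principle require passage through still larger genus before it simplifies. Moreover, a negative answer would be intimately tied to the smooth four-dimensional Poincar\'e conjecture, so one should not expect a complete proof to be purely combinatorial.

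Consequently the realistic plan is the program pursued in this paper: rather than settle the conjecture in full, one verifies it on the rich families of trisections arising from explicit topological constructions. The present results show that the trisections obtained by the trivial Price twist along $P^2$-knots of Kinoshita type destabilize to the genus $0$ trisection, so the concrete step I would carry out is to iterate this strategy — applying the handleslide-and-destabilization analysis of Theorem \ref{main theorem} to progressively broader classes of surface-knot constructions — accumulating positive evidence until either a uniform reduction scheme emerges or a genuine obstruction is isolated.
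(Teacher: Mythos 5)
You were asked to prove a statement that the paper itself records only as a conjecture, quoted from \cite{MSZ}: the paper contains no proof of it, and your proposal --- correctly --- does not claim one either. In that sense your write-up is honest rather than wrong: your third paragraph accurately isolates the coordination problem (standardizing one pair of cut systems destroys normal forms for the other two, with no monotone complexity that decreases as in Waldhausen's three-dimensional argument), and your final paragraph describes essentially what the paper actually does, namely verify the conjecture on a specific family: Corollary \ref{main corollary} establishes it for trisections obtained by the trivial Price twist along $P^2$-knots of Kinoshita type, via the explicit chain of handle slides and destabilizations carried out diagrammatically in the proof of Theorem \ref{main theorem}.

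That said, if your first two paragraphs were pursued as an actual proof, they would fail at identifiable points, and you should be aware of them. First, the quoted corollary of \cite{GK} concerns diffeomorphism of the underlying $4$-manifolds and already allows stabilization among the moves; extracting the \emph{isotopy} statement in the conjecture from diagram-level moves requires additional care (handleslides and surface diffeomorphisms relate diagrams of trisections equivalent up to diffeomorphism of $S^4$, and promoting this to isotopy needs Cerf-type input on $\pi_0\mathrm{Diff}^+(S^4)$). Second, your claim that low-genus cases are ``treatable by direct enumeration'' substantially understates the difficulty: the classification of genus-two trisections of $S^4$ is itself a nontrivial theorem of Meier and Zupan, and nothing comparable is known at genus three or higher. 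Third, and most seriously, your proposed mechanism for producing a destabilizing triple from the constraint $H_*(S^4)=H_*(\mathrm{pt})$ cannot work: the homological data of a trisection diagram of any homotopy $4$-sphere is identical to that of $S^4$, so homology is blind to exactly the phenomena (Gluck twists, Price twists, potential exotica) that make the conjecture hard. Indeed, the standardness of the diagrams arising from Gluck and Price twists treated in \cite{GM}, \cite{Na} and in this paper was established not by any homological criterion but by exhibiting explicit slide-and-destabilize sequences, which is the only currently available technique and is precisely the method of Theorem \ref{main theorem}.
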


\begin{rem}
From \cite{Na}, we immediately see that Figure 19 right of \cite{KM}, that is, a $(6,2)$-trisection diagram of $\Sigma_{P_-}(S^4)$, is a stabilization of the $(0,0)$-trisection diagram of $S^4$ up to handle slides and diffeomorphisms. 
\end{rem}

\begin{rem}\label{rem:Kisoshita conjecture}
For a 2-knot $K$ and an unknotted $P^2$-knot $P$ in $S^4$, the $P^2$-knot $S$ admits the decomposition $K\#P$ is said to be \textit{of Kinoshita type}. It is not known whether every $P^2$-knot in $S^4$ is of Kinoshita type. This question is called the Kinoshota question or the Kinoshita conjecture. We may answer the question with $\tau_S({S^4})$ \cite{KM}. Note that in \cite[Question 6.2]{KM}, if $S$ is of Kinoshita type, then trisections in the question are diffeomorphic to trisections obtained by the Gluck twist \cite{Na}. In particular, if $S$ is the connected sum of the unknotted $P^2$-knot and a spun or twist spun 2-knot, \cite[Question 6.2]{KM} reduces to \cite[Question 6.4]{GM} in the sense of diffeomorphic trisections.
\end{rem}

\begin{que*}[Question 6.4 in \cite{GM}]
Is the trisection diagram constructed by  \cite{Me} and \cite[Lemma 5.5]{GM} for the Gluck twist along a spun or twist spun 2-knot is a stabilization of the $(0,0)$-trisection diagram of $S^4$?
\end{que*}

This question is not answered even in the case of the spun trefoil, which can be regarded as the simplest non trivial spun 2-knot.

By the following theorem, called Waldhausen's theorem, we can see the reason that \cite[Conjecture 3.11]{MSZ} is a 4-dimensional analogue of Waldhausen's theorem on Heegaard splittings.

\begin{thm}[\cite{Wal},\cite{Sch}]
The 3-sphere $S^3$ admits a unique Heegaard splitting up to isotopy for each genus.
\end{thm}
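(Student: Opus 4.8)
The plan is to deduce the statement from the single assertion that \emph{every Heegaard splitting of $S^3$ of positive genus is stabilized}, and then to induct on the genus. Granting this assertion, let $\Sigma_1$ and $\Sigma_2$ be two genus $g$ Heegaard surfaces in $S^3$ with $g \ge 1$. Each is stabilized, hence destabilizes to a genus $g-1$ Heegaard surface; by the inductive hypothesis these destabilizations are isotopic, and since the stabilization operation on a fixed splitting is well defined up to isotopy (any two trivial stabilizing handles are isotopic), $\Sigma_1$ and $\Sigma_2$ are isotopic. The base case $g=0$ holds because any genus $0$ Heegaard surface is a $2$-sphere bounding a ball on each side, and any such sphere is ambiently isotopic to the standard one by Alexander's theorem. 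Thus the whole theorem reduces to the italicized assertion.

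To prove that a genus $g \ge 1$ splitting $\Sigma = \partial H_+ = \partial H_-$ is stabilized, I would use the sweep-out method of Rubinstein and Scharlemann, in the form exploited by Schultens. Let $h \colon S^3 \to [-1,1]$ be the sweep-out of the standard genus $0$ splitting, so that $h^{-1}(\pm 1)$ are two points and each level set $S_s := h^{-1}(s)$ with $s \in (-1,1)$ is a $2$-sphere separating $S^3$ into two balls. Let $f \colon S^3 \to [-1,1]$ be a sweep-out realizing the given splitting, with $f^{-1}(\pm 1)$ the spines of $H_\pm$ and $f^{-1}(t) = \Sigma$ for $t \in (-1,1)$. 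After a generic perturbation, the discriminant locus of $(h,f)$ is the Rubinstein--Scharlemann graphic $\Gamma \subset [-1,1]^2$, and the combinatorics of the complementary regions of $\Gamma$ records how each sphere $S_s$ sits relative to the handlebody structure of $f$.

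The heart of the argument, and the step I expect to be the main obstacle, is to extract from $\Gamma$ a \emph{reducing pair}: a value $s$ together with compressing disks $D_+ \subset H_+$ and $D_- \subset H_-$ for $\Sigma$ whose boundaries meet transversely in a single point. Such a dual pair of disks exhibits a cancelling handle and hence a destabilization of $\Sigma$. To locate it, I would track how $S_s$ moves relative to the spines $f^{-1}(\pm 1)$ as $s$ runs from $-1$ to $+1$: near each end $S_s$ is a small sphere unlinked from the opposite spine, and a connectedness analysis of $[-1,1]^2 \setminus \Gamma$ should force a region over which $S_s$ simultaneously compresses $\Sigma$ to both sides. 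The delicate points are ruling out the degenerate configurations in which no spanning region appears, and arranging that the two disks can be made to intersect in exactly one point rather than merely disjointly; this is precisely the content of the Rubinstein--Scharlemann and Schultens analysis, and it is where essentially all of the work lies.

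With the italicized assertion in hand, the induction of the first paragraph runs to completion, yielding that $S^3$ admits a unique Heegaard splitting of each genus up to isotopy, equivalently that every Heegaard splitting of $S^3$ is a stabilization of the genus $0$ splitting.
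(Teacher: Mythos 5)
The paper does not prove this statement; it quotes it as a classical theorem with citations to Waldhausen and Scharlemann/Schultens, so the only meaningful comparison is with the arguments in those sources. Your reduction skeleton is the standard one and is correct as far as it goes: granting (i) that every positive-genus Heegaard splitting of $S^3$ is stabilized and (ii) that stabilization of a fixed splitting is well defined up to isotopy, induction on genus (with the genus-$0$ base case handled by Alexander's theorem) yields uniqueness. Be aware, however, that (ii) is itself a genuine lemma --- uniqueness of stabilization --- and not merely the parenthetical observation that ``any two trivial stabilizing handles are isotopic''; it requires its own proof or an explicit citation.

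The genuine gap is assertion (i), which you yourself flag as ``where essentially all of the work lies.'' Moreover, the route you sketch is not quite how that assertion is actually established. A sweep-out/graphic argument (Rubinstein--Scharlemann, or thin position in Scharlemann--Thompson) directly produces, for genus at least $2$, only a pair of \emph{disjoint} compressing disks on opposite sides of $\Sigma$, i.e., weak reducibility. Passing from there to a cancelling pair of disks meeting in one point is not a refinement of the same graphic analysis: the standard proofs invoke Casson--Gordon (a weakly reducible splitting is reducible unless the manifold contains a closed incompressible surface, which $S^3$ does not), then Haken's lemma to express a reducible splitting of $S^3$ as a connected sum of lower-genus splittings of $S^3$, and then the inductive hypothesis to conclude each summand is standard, whence the original splitting is stabilized; the genus-$1$ case needs a separate argument (a solid torus in $S^3$ whose complement is a solid torus has unknotted core). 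None of this machinery appears in your outline, and the step you describe as ``delicate'' is in effect the entire theorem. As written, the proposal is a correct road map consistent with the cited proofs, but it is not a proof.
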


For more details on the Price twist and a  trisection obtained by the Price twist, see \cite{KM, Pr}.

\section{A boundary-stabilization}\label{sec:boundary-stabilization}

In this section, we review a boundary-stabilization for a 4-manifold with boundary introduced in \cite{KM}. 

\begin{dfn}
Let $Y=Y_1 \cup Y_2 \cup Y_3$ be a 4-manifold with $\partial{Y}\not=\emptyset$, where $Y_i \cap Y_j = \partial{Y_i} \cap \partial{Y_j}$, and $C$ an arc properly embedded in $Y_i \cap Y_j \cap \partial{Y}$ whose endpoints are in $Y_1 \cap Y_2 \cap Y_3$. Also let $\nu(C)$ be a fixed open tubular neighborhood of $C$. Then, we define $\tilde{Y_i}, \tilde{Y_j}, \tilde{Y_k}$ as follows:
\begin{itemize}
\item $\tilde{Y_i} = Y_i - \nu(C)$,
\item $\tilde{Y_j} = Y_j - \nu(C)$,
\item $\tilde{Y_k} = Y_k \cup \overline{\nu(C)}$.
\end{itemize} 
The replacement of $(Y_1,Y_2,Y_3)$ by $(\tilde{Y_i}, \tilde{Y_j}, \tilde{Y_k})$ is said to be a \textit{boundary-stabilization} along $C$. In this case, we say that $\tilde{Y_k}$ has been obtained by boundary-stabilizing $Y_k$ along $C$.
\end{dfn}

As we have seen in Section \ref{sec:intro}, we need a boundary-stabilization in order to construct a relative trisection of the complement of a surface-knot in a closed 4-manifold. The following explanation is more precise.

Let $S$ be a surface-knot in a closed 4-manifold $X$ with trisection $(X_1,X_2,X_3)$. Suppose that $S$ is in $(b,c)$-bridge position with respect to $(X_1,X_2,X_3)$. Let $X_i^{'} = X_i-\nu(S)$. Then, $X-\nu(S)$ admits a natural decomposition $X-\nu(S) = X_1^{'} \cup X_2^{'} \cup X_3^{'}$. However, this decomposition of $X-\nu(S)$ admits $(X_1^{'}, X_2^{'}, X_3^{'})$ as a relative trisection if and only if $S$ is a 2-knot and $S$ is in 1-bridge position, that is, $b=1$. This is because if $b>1$, then the triple intersection $X_i^{'} \cap X_j^{'} \cap \partial{(X-\nu(S))}$, which is diffeomorphic to the disjoint union $\sqcup_{b} S^1 \times I$ of $b$ annuli, is disconnected. This contradicts the fact that for a relative trisection $(Y_1,Y_2,Y_3)$, if $\partial{Y}$ is connected, then $Y_i \cap Y_j \cap \partial{Y}$ must be connected.
So, for all $S$ except 2-knots, $X-\nu(S)$ cannot admit $(X_1^{'}, X_2^{'}, X_3^{'})$ as a relative trisection. Although, we can refine the decomposition by boundary-stabilizing each $X_i^{'}$ so that $X-\nu(S)$ admits a relative trisection for each $S$. Put briefly, the way is the following:

In this paper, since we focus on a $P^2$-knot, we first review a boundary-stabilization of the complement of a $P^2$-knot. In the above situation, suppose also that $S$ is a $P^2$-knot and $b=2$ (Theorem \ref{thm:good bridge position}). For each $i=1,2,3$ and $\{i,j,k\} = \{1,2,3\}$, we define $C_i$ to be an arc in $X_j^{'} \cap X_k^{'} \cap \partial(X-\nu(S))$ whose endpoints are in $X_1^{'} \cap X_2^{'} \cap X_3^{'}$ which intersects two distinct connected components of $\partial({X_1^{'} \cap X_2^{'} \cap X_3^{'}})$. Take $C_1$, $C_2$ and $C_3$ so that they have different endpoints. Then, if we boundary-stabilize $X_\ell^{'}$ along $C_\ell$, we obtain the decomposition $X-\nu(S) = \tilde{X_1} \cup \tilde{X_2} \cup \tilde{X_3}$, where $\tilde{X_\ell}$ is the submanifold of $X-\nu(S)$ obtained by boundary-stabilizing $X_\ell^{'}$ along $C_1$, $C_2$, and $C_3$. We see that $\tilde{X_i} \cap \tilde{X_j} \cap \partial(X-\nu(S))$ is connected and if we furthermore check on the structure of an open book decomposition which will be induced, we have the following proposition.

\begin{prop}[\cite{KM}]
The 3-tuple $(\tilde{X_1}, \tilde{X_2}, \tilde{X_3})$ is a relative trisection of $X-\nu(S)$.
\end{prop}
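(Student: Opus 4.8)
The plan is to verify directly the two defining conditions of a relative trisection for the triple $(\tilde{X_1}, \tilde{X_2}, \tilde{X_3})$, by controlling how each boundary-stabilization alters the pieces of the decomposition $(X_1', X_2', X_3')$. First I would record the diffeomorphism type of the unstabilized pieces. Since $S$ is in $(2,1)$-bridge position (so $b=2$, $c=1$), $S \cap X_i$ is a single trivial disk and $S \cap (X_i \cap X_j)$ is a trivial $2$-tangle; hence $X_i' = X_i - \nu(S)$ is obtained from the $1$-handlebody $X_i \cong Z_{k_i}$ by removing a neighborhood of a properly embedded trivial disk, which one checks is again a $1$-handlebody (locally, deleting a trivial $D^2\times D^2$ from $B^4$ yields $S^1 \times D^3$). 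The triple intersection $\Sigma' = X_1' \cap X_2' \cap X_3'$ is then $\Sigma_g$ with the $2b$ open disks around the bridge points $S \cap \Sigma_g$ removed. As the excerpt notes, the obstruction to $(X_1', X_2', X_3')$ being a relative trisection is precisely that the boundary sectors $X_i' \cap X_j' \cap \partial(X-\nu(S))$, and hence the pages of the induced open book, are disconnected when $b=2$.

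Next I would analyze a single boundary-stabilization. The neighborhood $\nu(C_i)$ of the properly embedded arc $C_i$ is a $4$-dimensional $1$-handle $I \times D^3$, so removing it from two sectors and gluing it to the third is a controlled handle move. I would check that $\tilde{X_k} = X_k' \cup \overline{\nu(C_i)}$ remains a $1$-handlebody, since attaching a $1$-handle along the boundary keeps it within the $Z_k$ family (with $g$ and the page parameters shifted), and likewise that $\tilde{X_i}$ and $\tilde{X_j}$, from which a $1$-handle's worth has been excised, remain $1$-handlebodies. The accompanying effect on the Heegaard-type sectors $\tilde{X_i} \cap \tilde{X_{i\pm 1}}$ is the relative analogue of a Heegaard stabilization, which I would match against the model pieces $Y_{g,k;p,b}^{\pm}$.

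The crux is the effect on the boundary sectors. Because $C_1, C_2, C_3$ are chosen to join distinct components of $\Sigma'$ with pairwise distinct endpoints, each boundary-stabilization tubes together two previously separate components of a boundary sector. I would show that after all three stabilizations the central surface and each sector $\tilde{X_i} \cap \tilde{X_j} \cap \partial(X-\nu(S))$ become connected, and that the induced open book on $\partial(X-\nu(S)) \cong Q$ has connected page $\Sigma_p^b$; assembling the resulting parameters $(g,k;p,b)$ then furnishes the required diffeomorphisms $\phi_i \colon \tilde{X_i} \to Z_k$ carrying the sectors to $Y_{g,k;p,b}^{\pm}$ and $Y_{g,k;p,b}^{0}$. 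The main obstacle is exactly this last step: verifying that three boundary-stabilizations along arcs with distinct endpoints suffice to make every sector connected and to realize the correct page $\Sigma_p^b$. This is not a single clean computation but a careful bookkeeping of the open book on the Seifert-fibered boundary $Q$, tracking how each tubing merges binding components and pages until the connectivity demanded by Lemma \ref{lem:open book decomposition} is achieved.
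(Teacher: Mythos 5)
Your proposal takes essentially the same approach as the paper: the paper, too, establishes this proposition by constructing arcs $C_i$ joining distinct components of $\partial(X_1^{'} \cap X_2^{'} \cap X_3^{'})$, boundary-stabilizing each $X_i^{'}$, observing that the sectors $\tilde{X_i} \cap \tilde{X_j} \cap \partial(X-\nu(S))$ become connected, and checking the induced open book structure required by Lemma \ref{lem:open book decomposition}, deferring the detailed verification to \cite{KM}. The ``careful bookkeeping'' you correctly identify as the crux is exactly the part the paper also leaves to the cited reference, so your outline (including the sound observations that each $X_i^{'}$ is still a 1-handlebody, e.g. removing a trivial disk neighborhood from $B^4$ yields $S^1 \times D^3$, and that attaching $\overline{\nu(C_i)}$ is a 4-dimensional 1-handle attachment) is faithful to the paper's treatment.
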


For a surface-knot $S$ except $P^2$-knots, we can construct a relative trisection of the complement of $S$ as with the case of a $P^2$-knot. The differencies are that for each $i=1,2,3$, we take $C_i$ to be a collection of $2-\chi(S)$ arcs and take each arc in $C_i$ so that the arc is parallel to a different one in $\nu(S) \cap X_j \cap X_k$.

Note that unlike a stabilization of a trisection, a boundary-stabilization depends on the choice of an arc. If $S$ is a $P^2$-knot, the type of a relative trisection of $X-\nu(S)$ obtained by boundary-stabilizations as above is either $(g,k;0,3)$ or $(g^{'},k^{'};1,1)$. In Section \ref{sec:main theorem}, since we glue a $(2,2;0,3)$-relative trisection of $\overline{\nu(S)}$ and a relative trisection of $X-\nu(S)$ from boundary-stabilizations, we need to boundary-stabilize $X-\nu(S)=\bigcup_{i=1}^{3} X_{i} - \nu(S)$ so that the type of the resulting relative trisection is $(g,k;0,3)$ for some $g$ and $k$.

Kim and Miller developed an algorithm to describe a relative trisection diagram of the complement of a surface-knot using the shadow diagram; see \cite[Section 4]{KM}.

For more details on boundary-stabilizations and a relative trisection of the complement of a surface-knot, see \cite{KM}.

\section{Main Theorem}\label{sec:main theorem}

As we have seen in Section \ref{sec:intro}, we can think about the following question.

\begin{que}\label{que:original question}
Let $S$ be a surface-knot in a  closed 4-manifold $X$ with trisection $T$. Is a trisection obtained by trivially gluing $\nu(S)$ and $X-\nu(S)$ diffeomorphic, especially isotopic, to a stabilization of $T$? In particular, if $X=S^4$, does this hold?
\end{que}

For the restricting case, we answer Question \ref{que:original question} affirmatively in Theorem \ref{main theorem}, our main theorem.

\begin{thm}\label{main theorem}
Let $X$ be a closed 4-manifold and $S$ the connected sum of a 2-knot $K$ with normal Euler number 0 and an unknotted $P^2$-knot with normal Euler number $\pm2$ in $X$. Also let $T_{(X,S)}$ be a bridge trisection of $(X,S)$ and $T_X$ the underlying trisection. Suppose that $S$ is in bridge position with respect to $T_X$. Also let $T_{X}^{'}$ be the underlying trisection of the bridge trisection obtained by meridionally stabilizing $T_{(X,S)}$ so that $S$ is in 2-bridge position with respect to $T_{X}^{'}$. Then, the trisection $T_S$ obtained by the trivial Price twist along $S$ is diffeomorphic to a stabilization of $T_{X}^{'}$. In particular, the trisection $T_S$ is diffeomorphic to a stabilization of $T_{X}$.
\end{thm}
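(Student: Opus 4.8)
The plan is to work entirely at the level of trisection diagrams and to track the diagram of $T_S$ through an explicit sequence of moves that terminate at a stabilization of $T'$. First I would set up the diagram of $T_S$ concretely. Since $S=K\#P_{\pm}$ with $e(K)=0$, the connected-sum structure lets me localize the picture: away from a neighborhood of the summand $P_{\pm}$, the complement $X-\nu(S)$ looks like the complement of the $2$-knot $K$, whose decomposition is already a relative trisection when $K$ is in $1$-bridge position. The nontrivial diagrammatic content is therefore concentrated near $P_{\pm}$, and the trivial Price twist ($\phi(S_{-1})=S_{-1}$) reglues $\nu(S)$ by the identity so that $X_S\cong X$. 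I would use the Kim--Miller algorithm (Section \ref{sec:boundary-stabilization}) to write down the relative trisection diagram of $X-\nu(S)$ obtained after boundary-stabilizing along the three arcs $C_1,C_2,C_3$, then glue it via the Castro--Ozbagci procedure to the relative trisection diagram of $\overline{\nu(S)}$ to produce an explicit trisection diagram for $T_S$. The key observation is that the three boundary-stabilizations each raise the genus, so $T_S$ is presented at genus strictly larger than that of $T'$, and the whole task is to remove that excess.

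Next I would identify the excess handles as destabilizable. Each boundary-stabilization along $C_i$ contributes, after gluing, a pair (or triple) of parallel curves of the three colors whose geometric configuration matches the local model of a trisection stabilization in Figure \ref{fig:stabilizationfordiagram}; concretely, one color curve should meet a second color curve in a single point and be disjoint (after handle slides) from the third. The heart of the argument is a bookkeeping sequence of handle slides on same-color curves that disentangles each such triple from the rest of the diagram. I would carry this out one boundary-stabilization arc at a time: slide the $\alpha$, $\beta$, $\gamma$ curves coming from the regluing of $\nu(S)$ over the curves produced by the stabilization so that, in a punctured-torus summand, the picture becomes one of the standard stabilization diagrams, and then invoke Corollary 12 of \cite{GK} (the stabilization/handle-slide equivalence) to destabilize. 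Because the trivial Price twist uses the identity gluing, the curves from $\nu(S)$ and from $X-\nu(S)$ should cancel in complementary pairs, exactly as in the computations in \cite{Na} that the paper cites; I expect each of the three arcs to cost one stabilization to introduce and one destabilization to remove.

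The main obstacle, and where I would spend most of the effort, is showing that the three destabilizations can actually be performed \emph{simultaneously in a compatible way}, i.e.\ that after removing the handle from $C_1$ one has not obstructed the slides needed for $C_2$ and $C_3$. The boundary-stabilization arcs are chosen with distinct endpoints on the binding circles of the induced open book (Section \ref{sec:boundary-stabilization}), and I would exploit this disjointness to argue that the three destabilizing regions are supported in disjoint punctured subsurfaces of $\Sigma$, so the slides for different arcs have disjoint support and do not interfere. A subtle point is the role of the $2$-knot summand $K$: its $1$-bridge shadow contributes base-point data and its own curves, and I must check that sliding the $P_{\pm}$-related curves never drags a $K$-curve across a base point in a way that changes the surface-knot rather than the trisection. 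Since the statement only asserts isotopy to a stabilization of $T'$ (and hence of $T$), I do not need to control which stabilization; it suffices to land on \emph{some} diagram that Corollary 12 of \cite{GK} certifies as a stabilization of the diagram of $T'$. Finally, the last sentence of the theorem follows formally: $T'$ is by construction a stabilization of $T$ (the meridional stabilizations that put $S$ in $2$-bridge position are ordinary trisection stabilizations, as noted after Theorem \ref{thm:good bridge position}), so a stabilization of $T'$ is a stabilization of $T$.
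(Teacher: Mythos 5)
Your overall strategy matches the paper's: glue the Kim--Miller relative trisection diagrams of $\overline{\nu(S)}$ and $X-\nu(S)$ via the Castro--Ozbagci procedure, use the splitting of $\mathcal{D}_{X-\nu(S)}$ into $\mathcal{D}_{S^4-\nu(P_\pm)}$ and $\mathcal{D}_{X-\nu(K)}$, and then remove the excess genus by same-color handle slides and destabilizations. However, there are two genuine gaps.

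The first and most serious is your endgame. You write that it ``suffices to land on some diagram that Corollary 12 of \cite{GK} certifies as a stabilization of the diagram of $T'$.'' Corollary 12 is an if-and-only-if statement about \emph{diffeomorphism} of the underlying 4-manifolds: it can only certify that two diagrams become related after stabilizing \emph{both} sides, i.e.\ stable equivalence. That is exactly the statement the paper's introduction points out is automatic and too weak; the content of the theorem is to upgrade stable equivalence to ``isotopic to a stabilization,'' which requires exhibiting a sequence of slides, isotopies, and destabilizations (no stabilizations) terminating \emph{exactly} at the diagram of $T'$, and then recognizing that terminal diagram. The paper does this recognition concretely: after five destabilizations what remains is $\mathcal{D}_{X-\nu(K)}$ plus one last triple of curves; because $e(K)=0$ the relative trisection of $X-\nu(K)$ is $0$-annular, so the induced open book has identity monodromy, and this is what lets $\alpha_1$ and $\gamma_1$ be made parallel for the final destabilization. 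The resulting closed diagram is then identified as the doubly pointed trisection diagram of $(X,K)$ with the base-point punctures capped off --- that is, the diagram of $T'$ itself. Your proposal invokes $e(K)=0$ only to localize the picture at the start and never for this monodromy step, and it contains no mechanism for identifying the terminal diagram with $T'$; without that, the argument proves nothing beyond stable equivalence.

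The second gap is the bookkeeping behind your ``disjoint support'' claim. You expect one destabilization per boundary-stabilization arc (three in all), carried out independently in disjoint punctured subsurfaces. In fact the glued diagram carries excess genus coming from the relative trisection of $\nu(P_-)$ as well as from the three boundary-stabilizations: the paper performs \emph{six} destabilizations, done sequentially, and the handle slides preparing each one involve curves running through both the $\mathcal{D}_{\nu(P_-)}$ and $\mathcal{D}_{S^4-\nu(P_-)}$ parts of the surface (for instance the chain of $\gamma$-slides before the first destabilization, and the re-slides of $\beta$ and $\gamma$ curves after each destabilization). These slides do not have disjoint support, so the plan of handling the three arcs simultaneously and independently would fail as stated; the ordering of the destabilizations, and the fresh slides required after each one, are an essential part of the argument rather than a removable convenience.
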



\begin{proof}[Proof of Theorem \ref{main theorem}]
Let ${\mathcal{D}}_Y$ be a relative trisection diagram of a 4-manifold $Y$. Also let $P_+$ and $P_-$ be unknotted $P^2$-knots in $X$ with normal Euler number $2$ and $-2$, respectively.

Since the preferred diagram ${\mathcal{D}}_{\nu(P_+)}$ and ${\mathcal{D}}_{S^4-\nu(P_+)}$ in \cite{KM} are the mirror images of ${\mathcal{D}}_{\nu(P_-)}$ and ${\mathcal{D}}_{S^4-\nu(P_-)}$, respectively, it suffices to proof Theorem \ref{main theorem} only for $S=K \# P_-$.

\subsection*{Constructing $T_S$}
It follows from \cite{KM} that ${\mathcal{D}}_{X-{\nu(S)}}$ is the union of ${\mathcal{D}}_{S^4-\nu(P_-)}$ and ${\mathcal{D}}_{X-\nu(K)}$. Thus, the gluing  ${\mathcal{D}}_{\nu(P_-)}$ and ${\mathcal{D}}_{X-{\nu(S)}}$ together by the trivial Price twist is described as Figure \ref{fig:gluing diagram}. Note that we construct $\mathcal{D}_{\nu(P_-)}$ in Figure \ref{fig:gluing diagram} by deforming the preferred diagram of $\nu(P_-)$ in \cite{KM} so that the gluing is described as Figure \ref{fig:gluing diagram}. In Figure \ref{fig:gluing diagram}, if we draw arcs of ${\mathcal{D}}_{\nu(P_-)}$ and ${\mathcal{D}}_{S^4-\nu(P_-)}$, then we can obtain Figure \ref{fig:starting diagram}. The diagram depicted in Figure \ref{fig:starting diagram} corresponds to $T_S$. It should be noted that we do not draw curves and arcs on the surface of
${\mathcal{D}}_{X-\nu(K)}$ in Figure \ref{fig:gluing diagram}, but ${\mathcal{D}}_{X-\nu(K)}$ has them.

\begin{figure}[h]
\begin{center}
\includegraphics[width=16cm, height=7.95cm, keepaspectratio, scale=1]{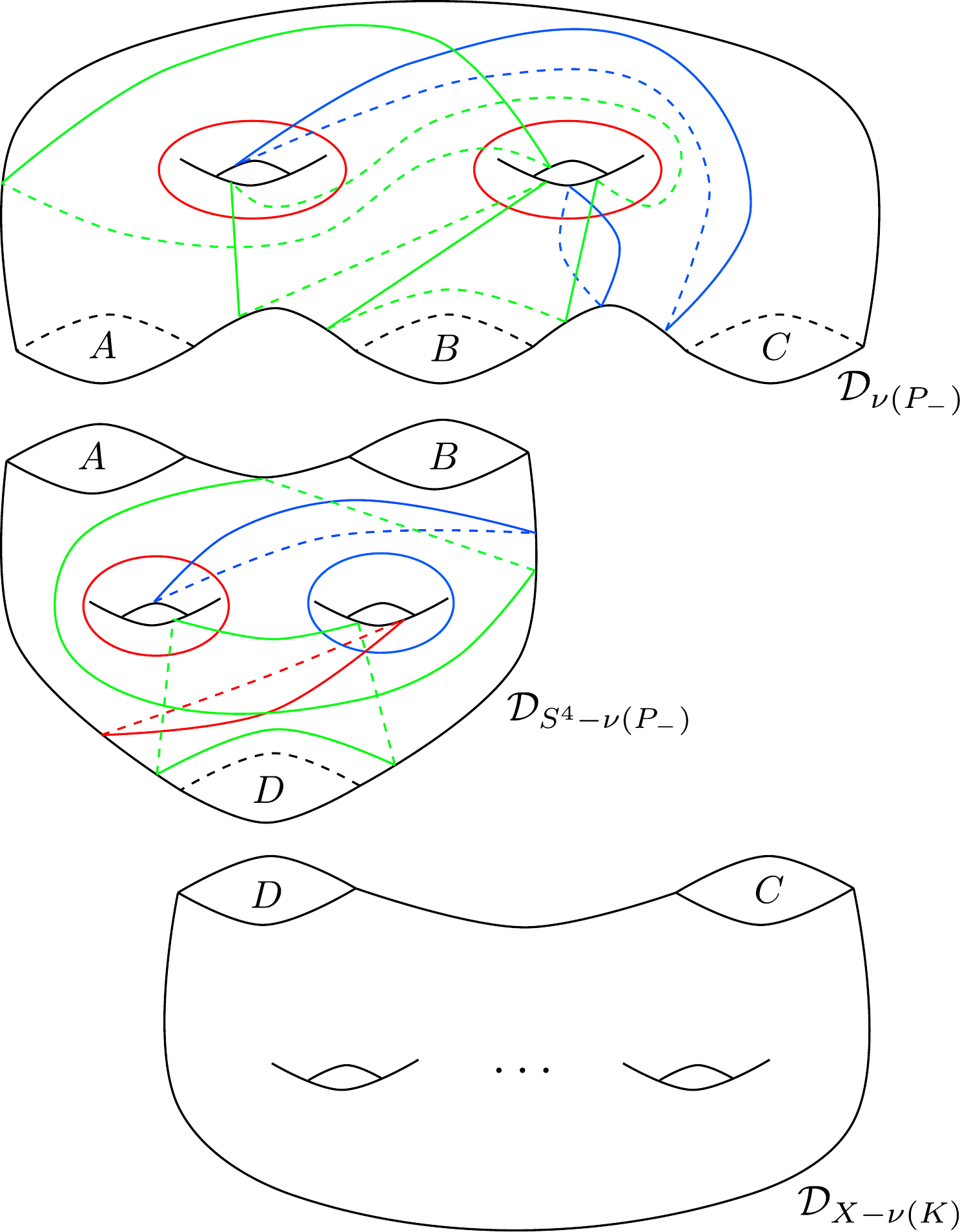}
\end{center}
\setlength{\captionmargin}{50pt}
\caption{The gluing diagram of ${\mathcal{D}}_{\nu(P_-)}$ and ${\mathcal{D}}_{X-{\nu(S)}}$ by the trivial Price twist along $S=K\# P_-$ in $X$. We glue ${\mathcal{D}}_{\nu(P_-)}$, ${\mathcal{D}}_{S^4-\nu(P_-)}$, and ${\mathcal{D}}_{X-\nu(K)}$ along the boundary components of the corresponding characters.}
\label{fig:gluing diagram}
\end{figure}

From now on, we deform trisection diagrams specifically. Note that from Figure \ref{fig:starting diagram} to Figure \ref{fig:after fifth destabilization}, the undrawn part describes ${\mathcal{D}}_{X-\nu(K)}$ with arcs and if necessary, let two arcs of $\mathcal{D}_{X-\nu(K)}$ be parallel by performing handle slides. Also note that for a $\alpha$ curve $\alpha_i$, we call a curve obtained by sliding $\alpha_i$ over another $\alpha$ curve also $\alpha_i$. The same is true for $\beta$ and $\gamma$ curves.

\subsection*{The first destabilization}
In Figure \ref{fig:starting diagram} (or Figure \ref{fig:before first destabilization}), we will destabilize $\alpha_1$, $\beta_1$ and $\gamma_1$. To do this, we slide $\gamma_{2}$ over $\gamma_{3}$ so that the geometric intersection number of $\gamma_{2}$ and $\alpha_{1}$ is 2. Then, we slide $\gamma_{2}$, $\gamma_{3}$ and $\gamma_{4}$ over $\gamma_{1}$ in this order. After that, we slide $\gamma_{2}$ over $\gamma_{4}$. As a result, $\gamma_{2}$ does not intersect $\alpha_1$. We also slide $\gamma_{4}$ over $\gamma_{3}$ so that $\gamma_{4}$ does not intersect $\alpha_1$. Finally, we slide $\gamma_3$ over $\gamma_1$, so that all $\gamma$ curves except $\gamma_1$ do not meet $\alpha_1$ and $\beta_1$. Then, we obtain Figure \ref{fig:before first destabilization}. In Figure \ref{fig:before first destabilization}, by destabilizing $\alpha_1$, $\beta_1$ and $\gamma_1$, that is, erasing $\gamma_1$ and surgering $\alpha_1$ or $\beta_1$ (if we choose $\alpha_1$, then we erase $\beta_1$ and vise versa), we get Figure \ref{fig:after first destabilization}.

\begin{figure}[h]
\begin{center}
\includegraphics[width=16cm, height=7cm, keepaspectratio, scale=1]{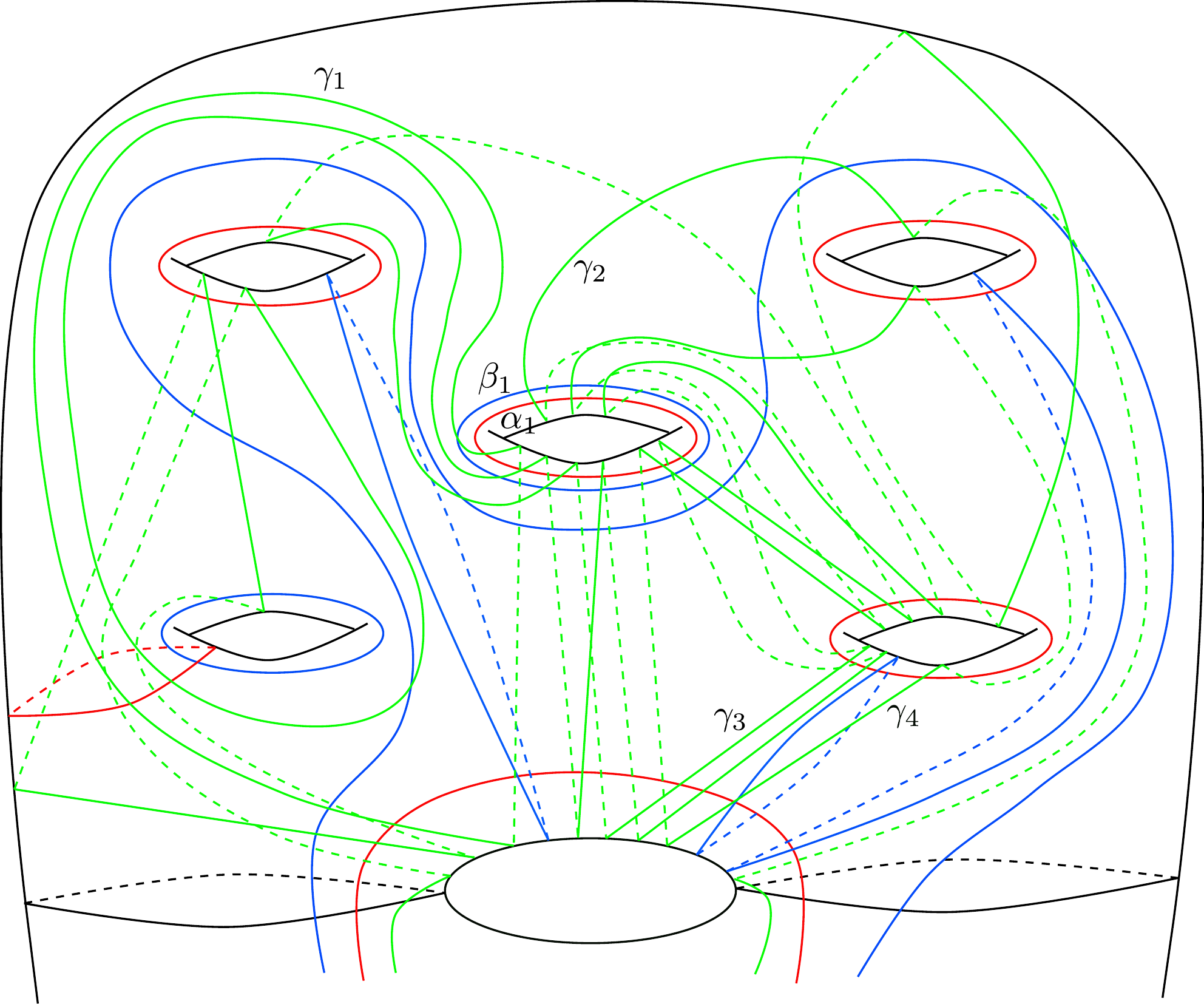}
\end{center}
\caption{Starting diagram.}
\label{fig:starting diagram}
\end{figure}
 
\begin{figure}[h]
\begin{center}
\includegraphics[width=16cm, height=7cm, keepaspectratio, scale=1]{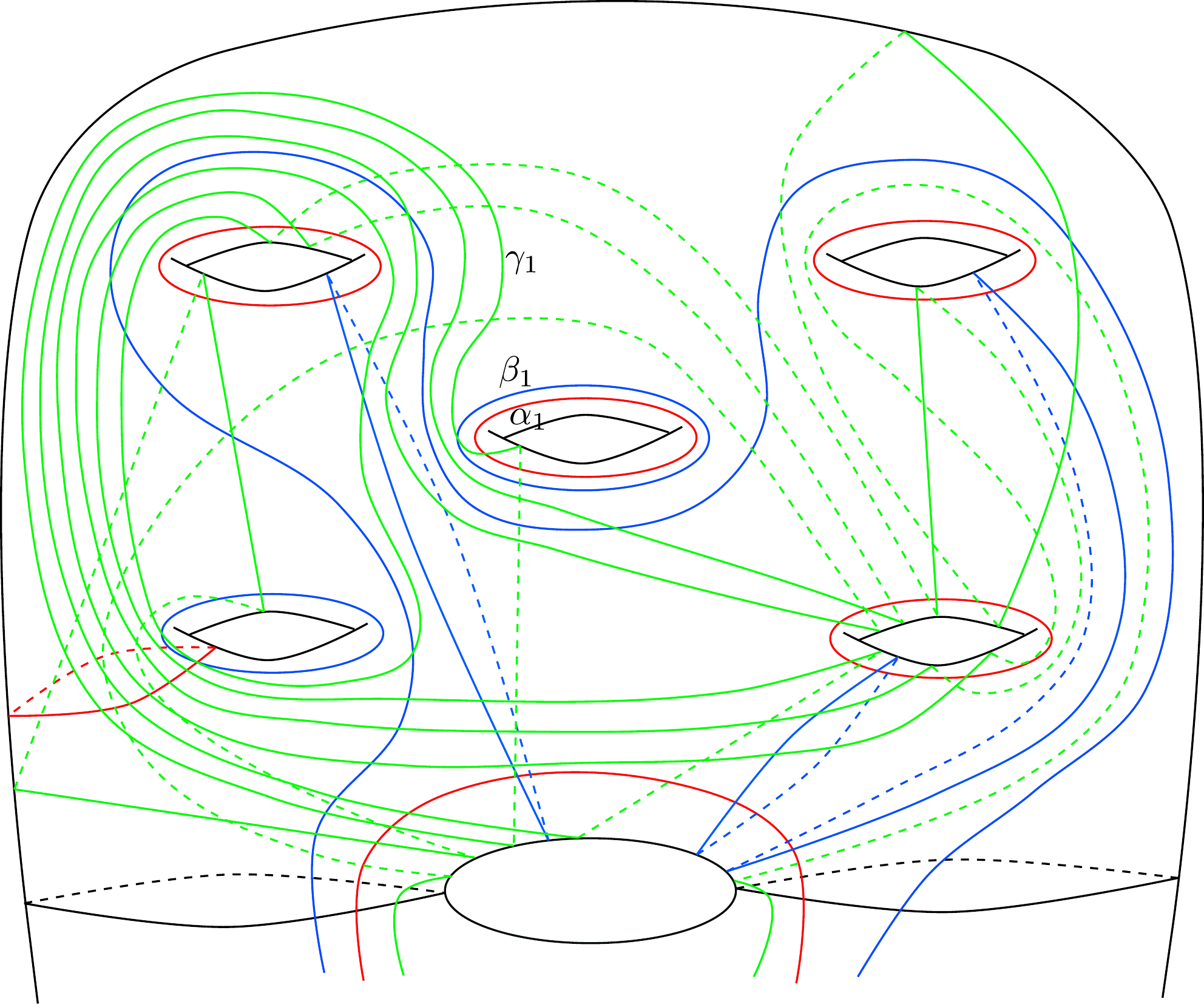}
\end{center}
\caption{Before the first destabilization.}
\label{fig:before first destabilization}
\end{figure}

\begin{figure}[h]
\begin{center}
\includegraphics[width=16cm, height=7cm, keepaspectratio, scale=1]{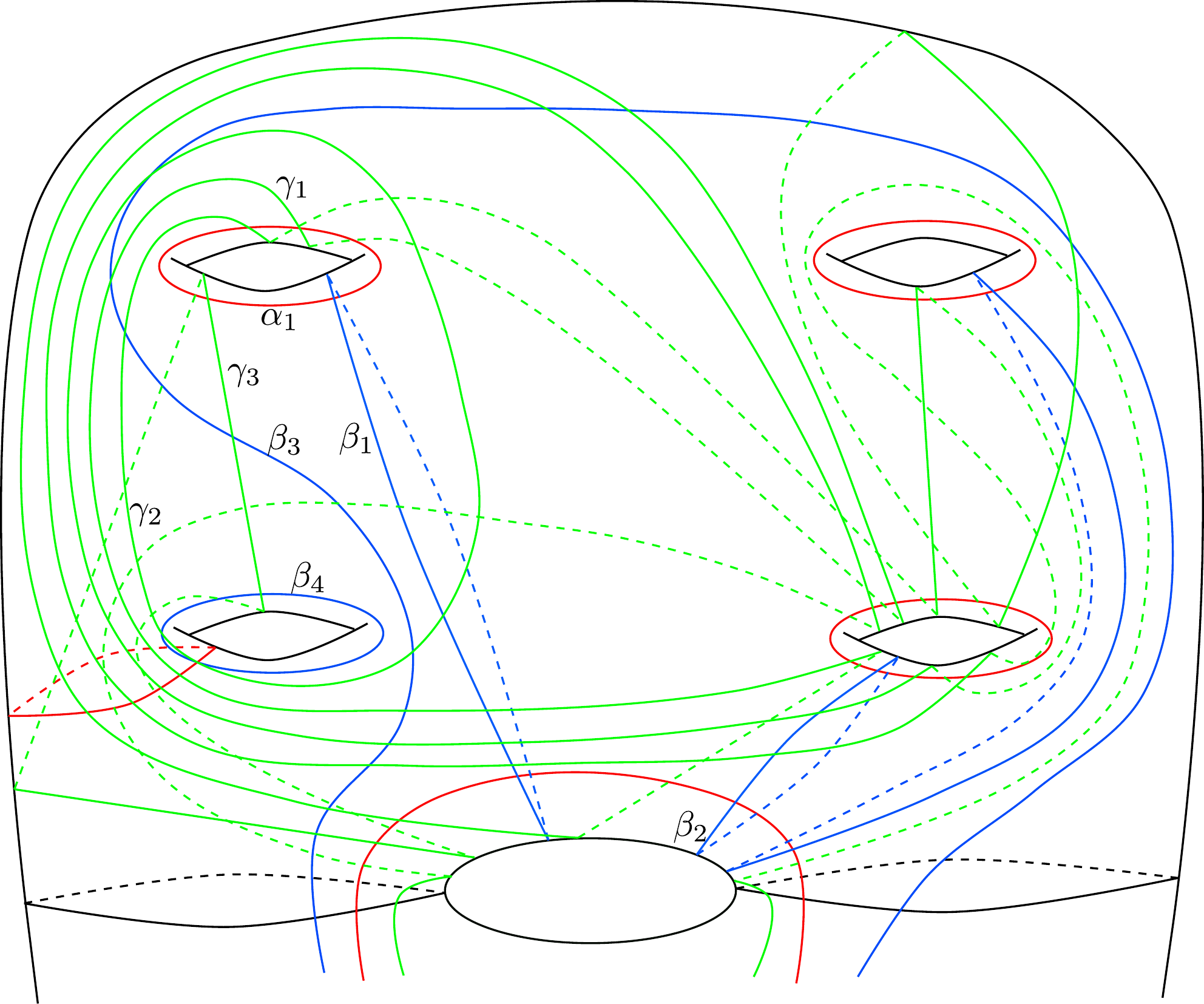}
\end{center}
\caption{After the first destabilization.}
\label{fig:after first destabilization}
\end{figure}

\subsection*{The second destabilization}
In Figure \ref{fig:after first destabilization} (or Figure \ref{fig:before second destabilization}),
we will destabilize $\alpha_1$, $\beta_1$ and $\gamma_1$. To do this, we firstly need to make $\beta_{1}$ parallel to $\gamma_1$. We slide $\beta_3$ over $\beta_4$ and $\beta_1$ over $\beta_2$. We again slide $\beta_1$ over $\beta_2$ so that $\beta_1$ is parallel to $\gamma_1$. After that, we slide $\gamma_2$ and $\gamma_3$ over $\gamma_1$ in order to remove the crossings of $\gamma_2$, $\gamma_3$ and $\alpha_1$. As a result, we obtain Figure \ref{fig:before second destabilization}. In Figure \ref{fig:before second destabilization}, by destabilizing $\alpha_1$, $\beta_1$ and $\gamma_1$, that is, erasing $\beta_1$ and $\gamma_1$ and surgering $\alpha_1$, we get Figure \ref{fig:after second destabilization}.

\begin{figure}[h]
\begin{center}
\includegraphics[width=16cm, height=7cm, keepaspectratio, scale=1]{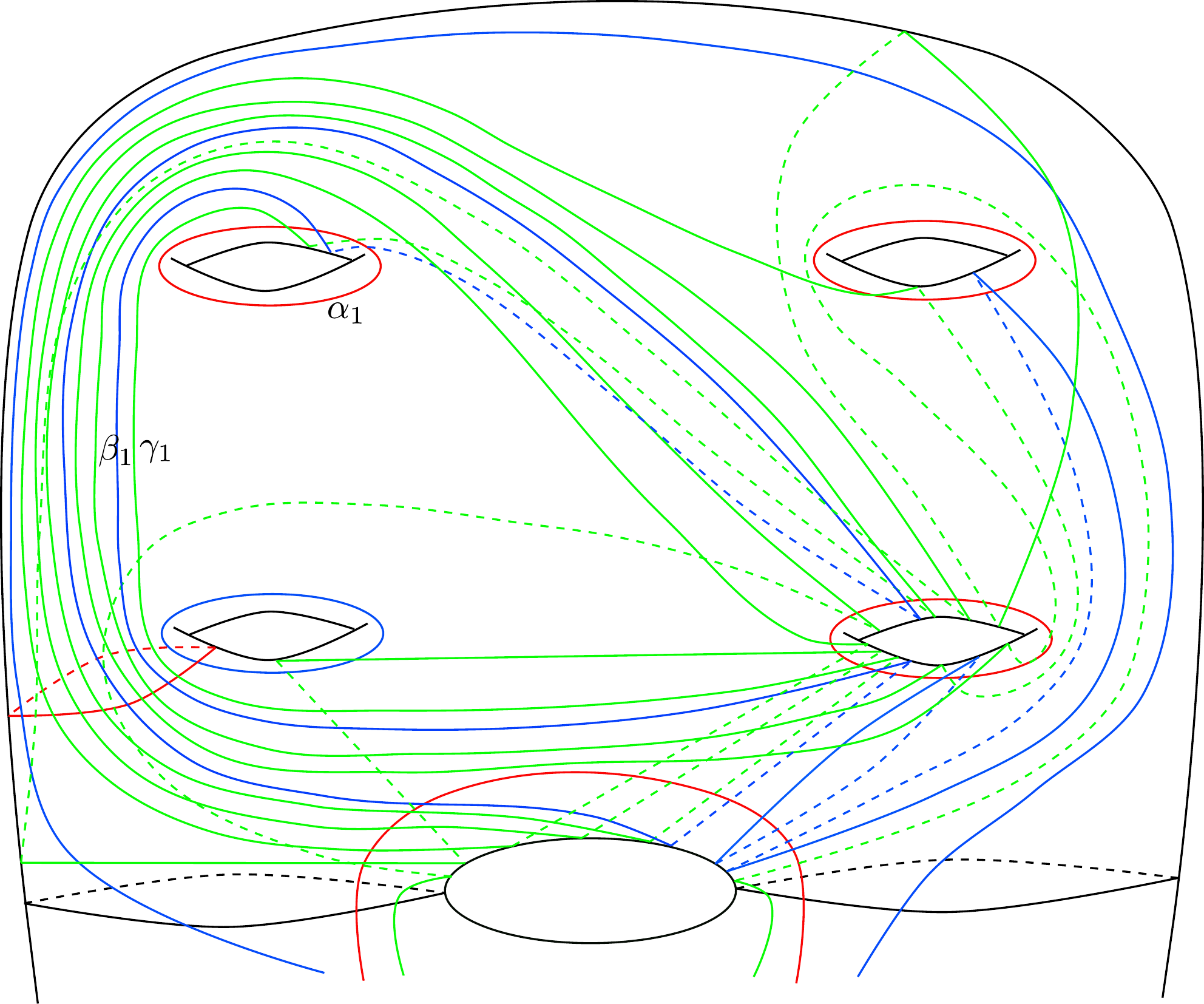}
\end{center}
\caption{Before the second destabilization.}
\label{fig:before second destabilization}
\end{figure}

\begin{figure}[h]
\begin{center}
\includegraphics[width=16cm, height=7cm, keepaspectratio, scale=1]{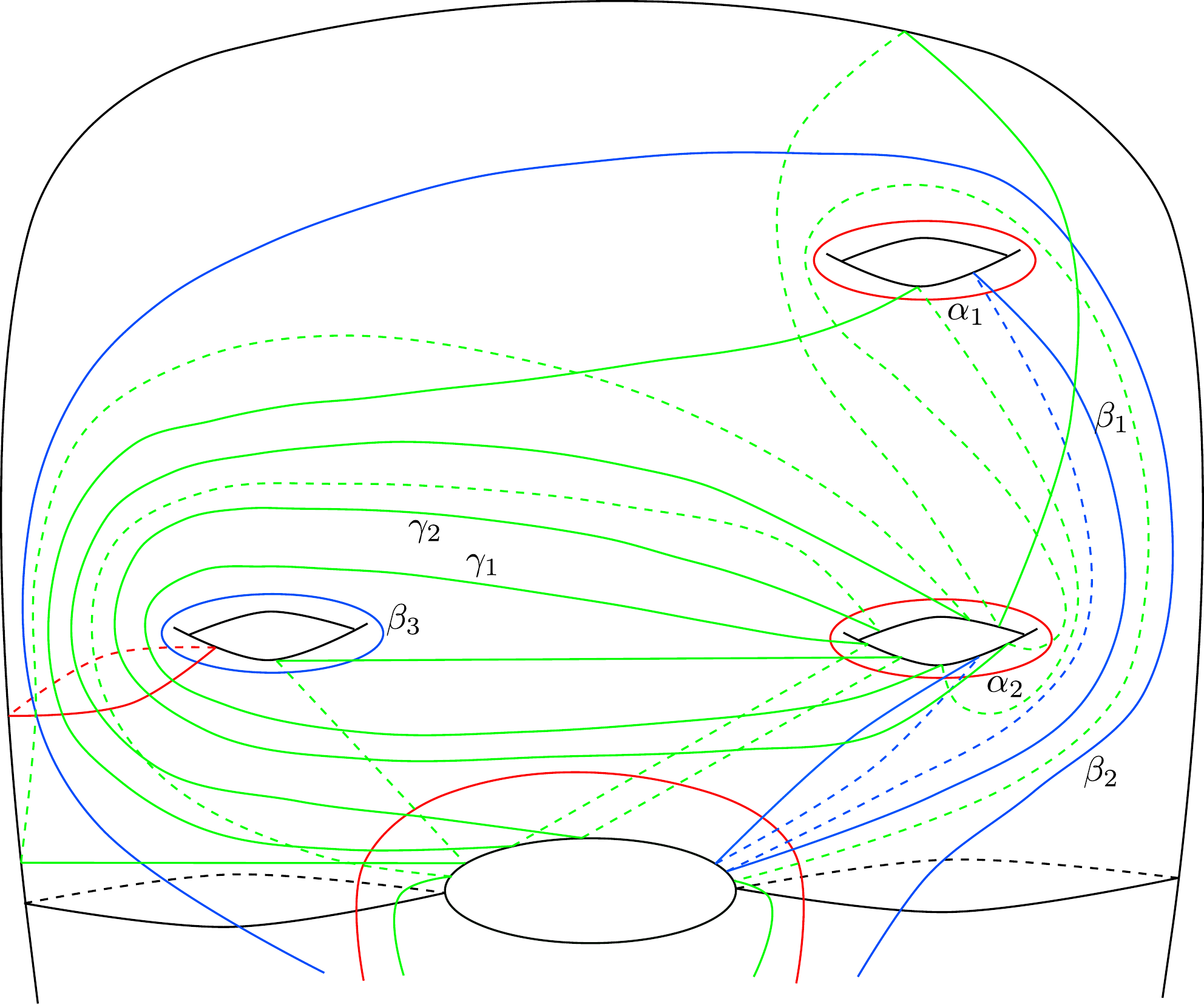}
\end{center}
\caption{After the second destabilization.}
\label{fig:after second destabilization}
\end{figure}

\subsection*{The third destabilization}
In Figure \ref{fig:after second destabilization} (or Figure \ref{fig:before third destabilization}), we will destabilize $\alpha_1$, $\beta_1$ and $\gamma_1$. To do this, we need to make $\beta_{1}$ parallel to $\gamma_1$. We slide $\gamma_1$ over $\gamma_2$ so that $\gamma_1$ does not intersect $\alpha_2$. Then, we slide $\beta_1$ over $\beta_2$ and $\beta_3$, so that $\beta_1$ is parallel to $\gamma_1$. As a result, we obtain Figure \ref{fig:before third destabilization}. In Figure \ref{fig:before third destabilization}, by destabilizing $\alpha_1$, $\beta_1$ and $\gamma_1$, we get Figure \ref{fig:after third destabilization}.

\begin{figure}[h]
\begin{center}
\includegraphics[width=16cm, height=7cm, keepaspectratio, scale=1]{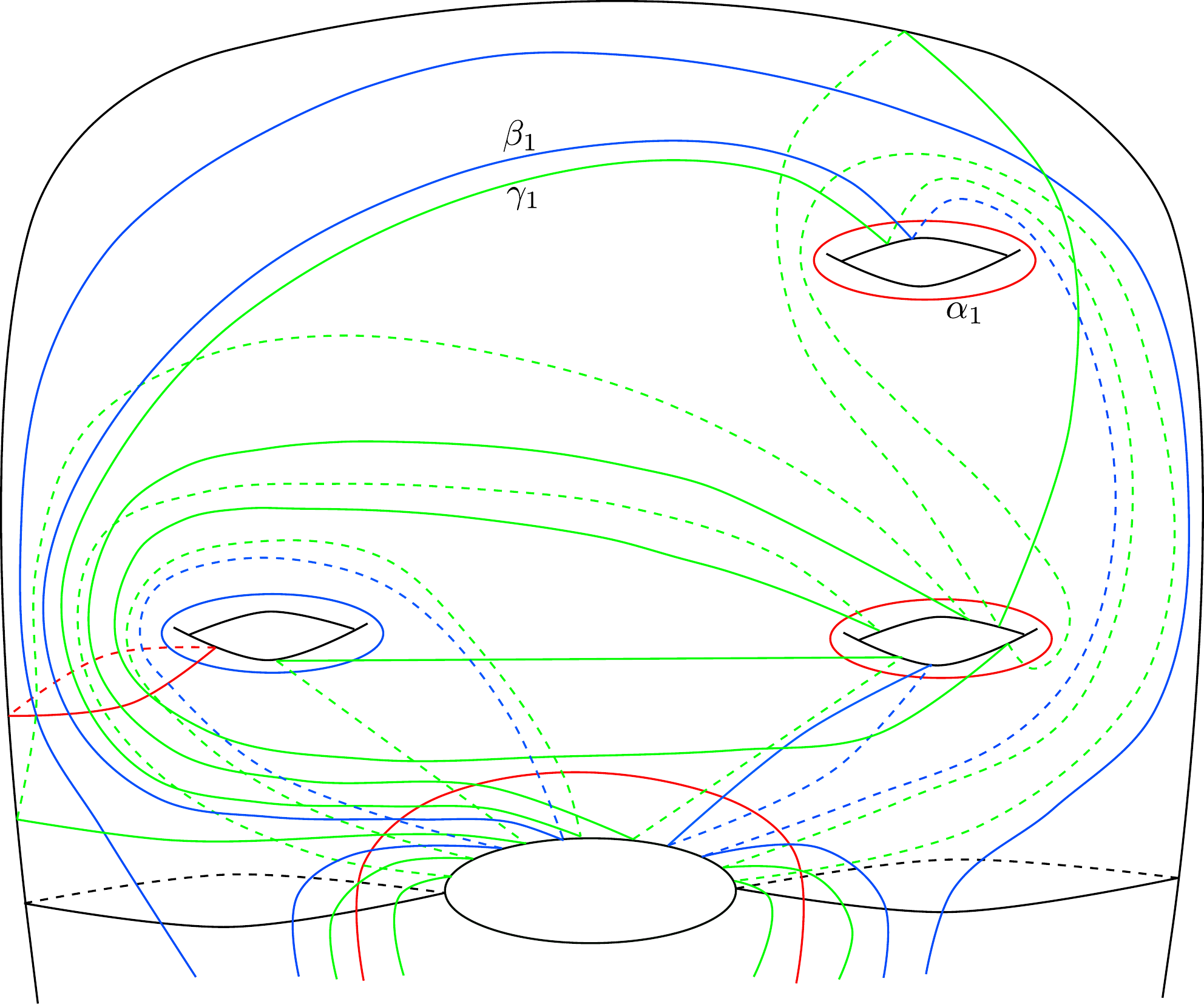}
\end{center}
\caption{Before the third destabilization.}
\label{fig:before third destabilization}
\end{figure}

\begin{figure}[h]
\begin{center}
\includegraphics[width=16cm, height=7cm, keepaspectratio, scale=1]{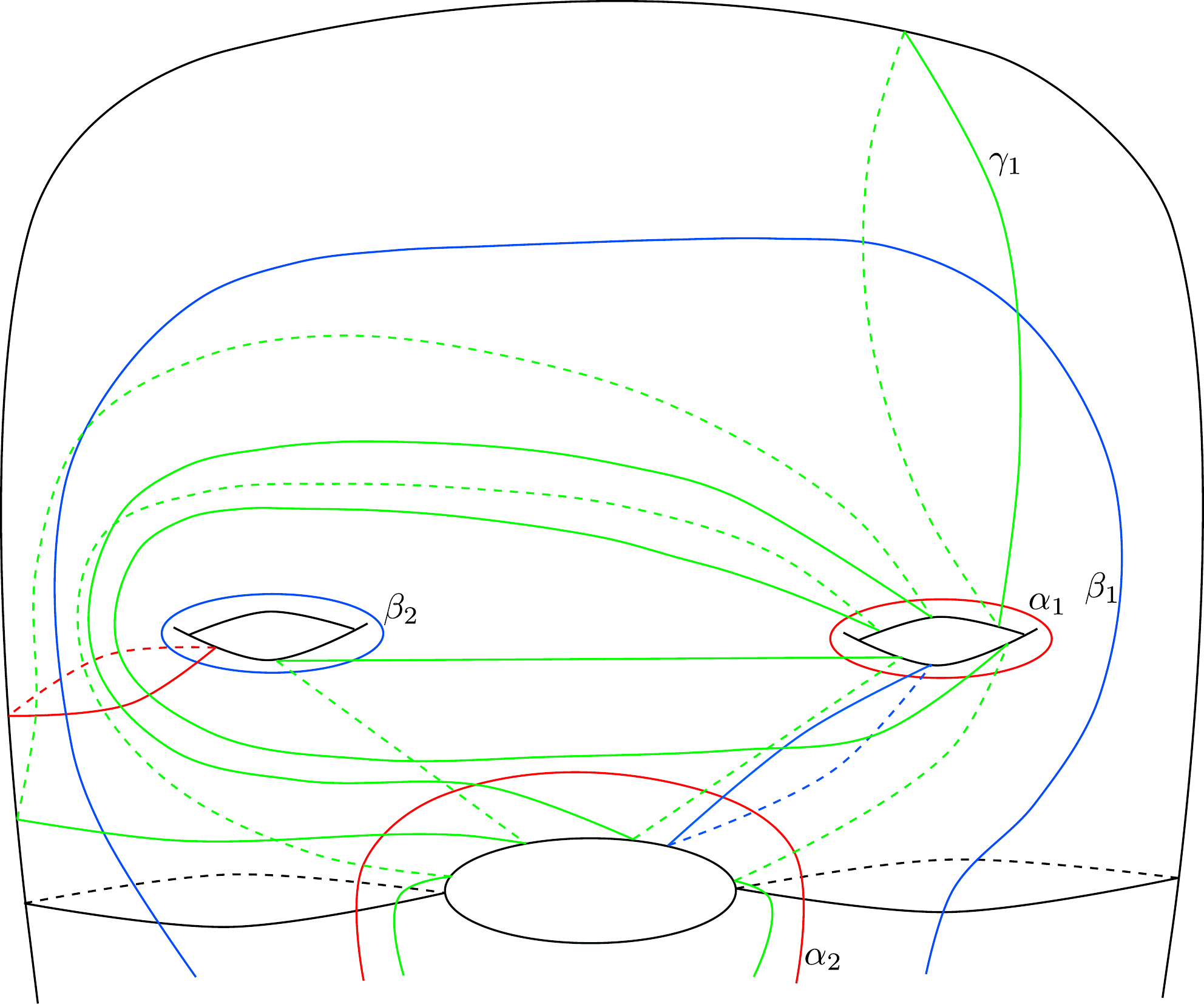}
\end{center}
\caption{After the third destabilization.}
\label{fig:after third destabilization}
\end{figure}

\subsection*{The fourth destabilization}
In Figure \ref{fig:after third destabilization} (or Figure \ref{fig:before fourth destabilization}), we will destabilize $\alpha_1$, $\beta_1$ and $\gamma_1$. To do this, we need to make $\beta_{1}$ parallel to $\alpha_1$. We slide $\beta_1$ over $\beta_2$ and $\alpha_1$ over $\alpha_2$, so that $\alpha_1$ is parallel to $\beta_1$. As a result, we obtain Figure \ref{fig:before fourth destabilization}. In Figure \ref{fig:before fourth destabilization}, by destabilizing $\alpha_1$, $\beta_1$ and $\gamma_1$, we get Figure \ref{fig:after fourth destabilization}.

\begin{figure}[h]
\begin{center}
\includegraphics[width=16cm, height=7cm, keepaspectratio, scale=1]{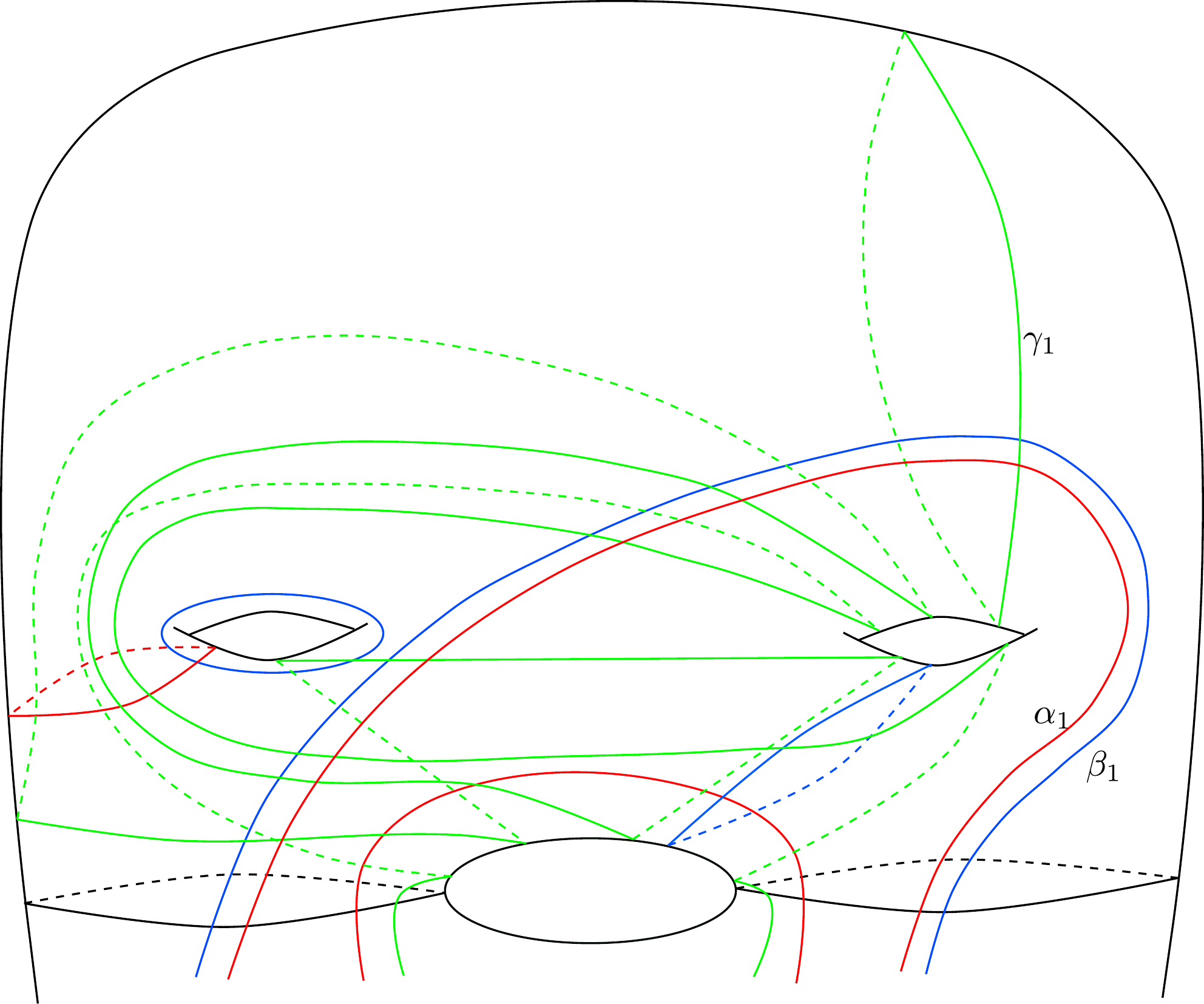}
\end{center}
\caption{Before the fourth destabilization.}
\label{fig:before fourth destabilization}
\end{figure}

\begin{figure}[h]
\begin{center}
\includegraphics[width=16cm, height=7cm, keepaspectratio, scale=1]{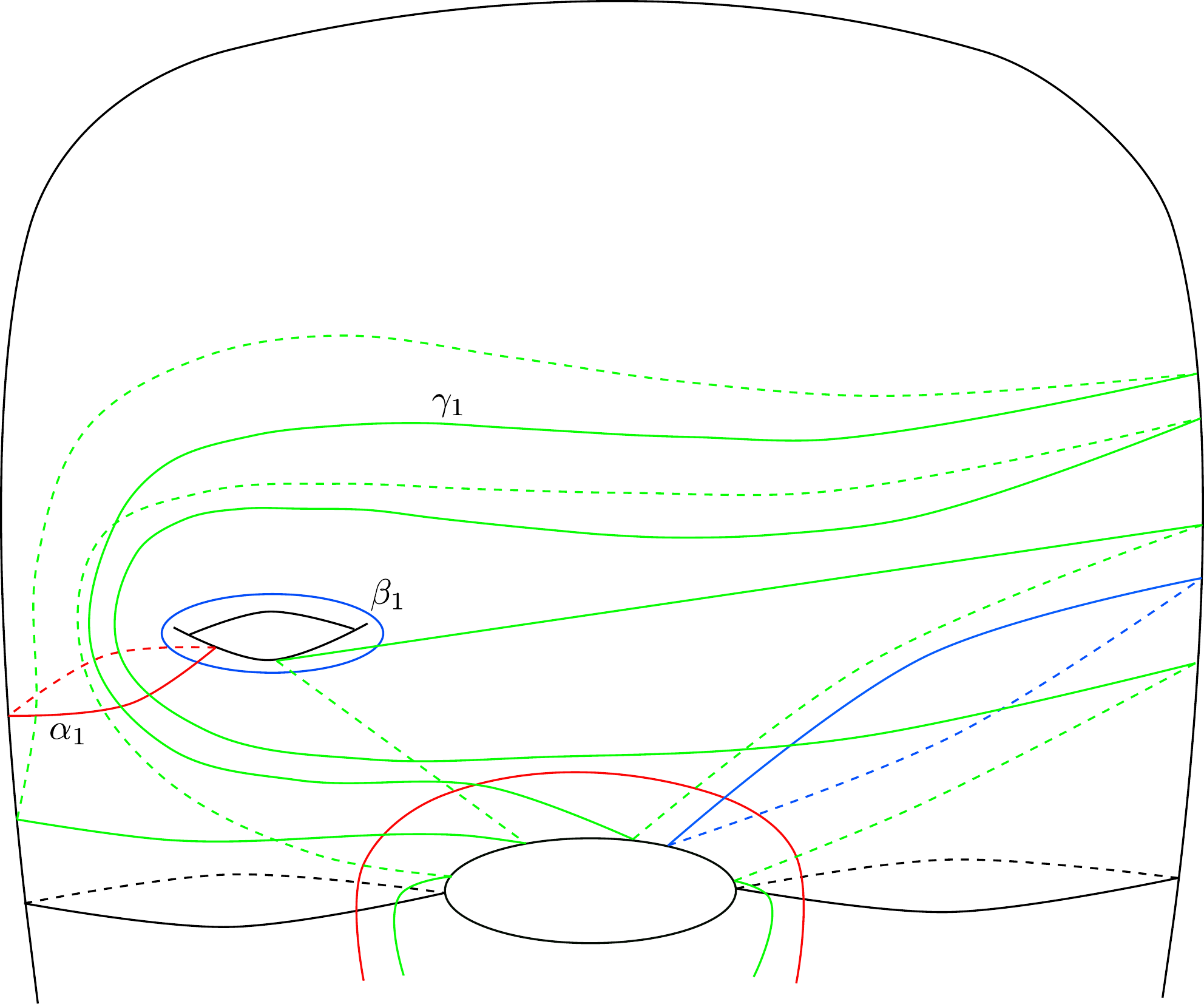}
\end{center}
\caption{After the fourth destabilization.}
\label{fig:after fourth destabilization}
\end{figure}

\subsection*{The fifth destabilization}
In Figure \ref{fig:after fourth destabilization}, we make $\gamma_1$ and $\alpha_1$ be parallel by isotopies. Then, we obtain Figure \ref{fig:before fifth destabilization}. In Figure \ref{fig:before fifth destabilization}, by destabilizing $\alpha_1$, $\beta_1$ and $\gamma_1$, we get Figure \ref{fig:after fifth destabilization}.

\begin{figure}[h]
\begin{center}
\includegraphics[width=16cm, height=7cm, keepaspectratio, scale=1]{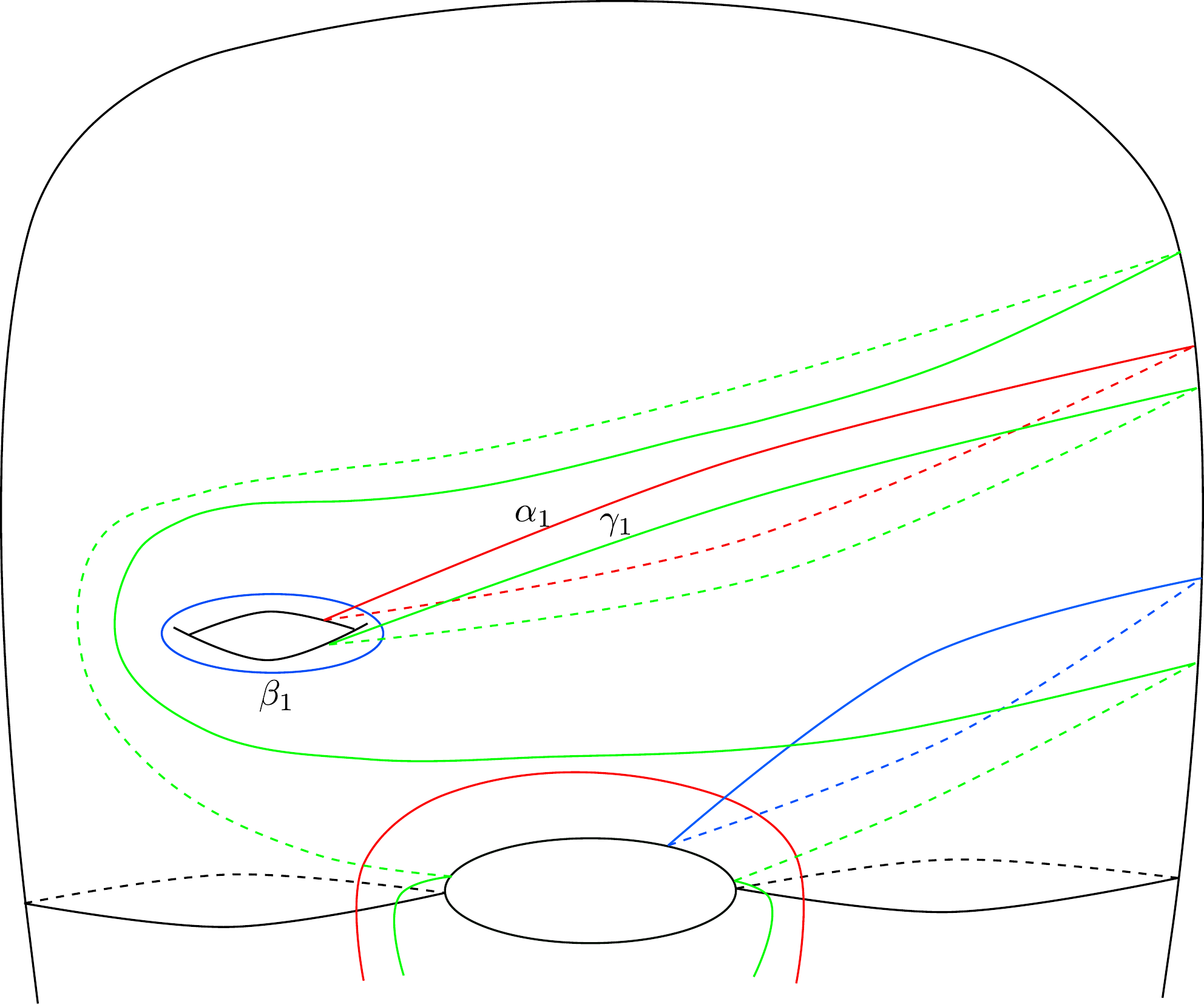}
\end{center}
\caption{Before the fifth destabilization.}
\label{fig:before fifth destabilization}
\end{figure}

\begin{figure}[h]
\begin{center}
\includegraphics[width=16cm, height=7cm, keepaspectratio, scale=1]{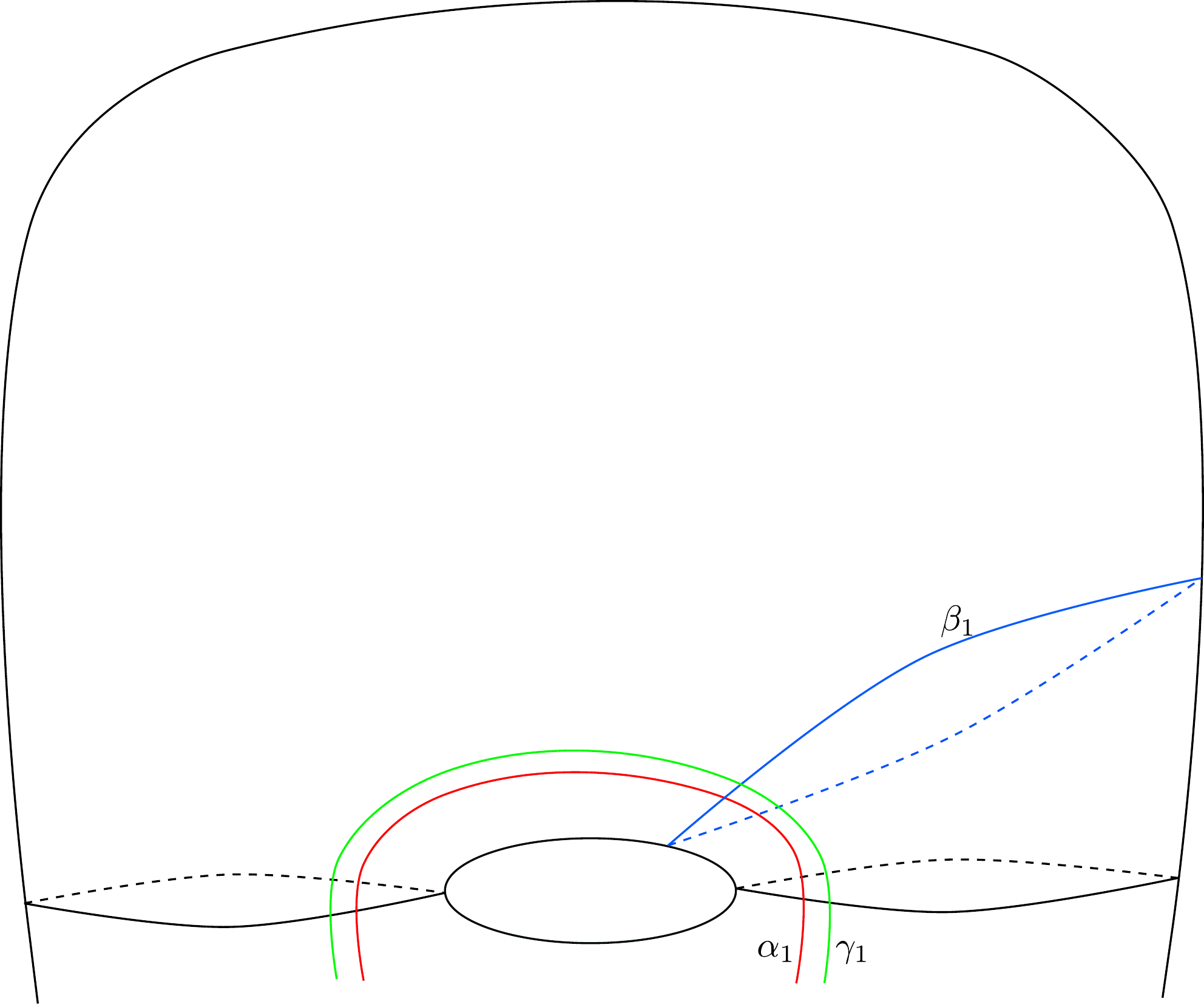}
\end{center}
\caption{After the fifth destabilization.}
\label{fig:after fifth destabilization}
\end{figure}

\subsection*{The sixth destabilization}
In Figure \ref{fig:after fifth destabilization}, the trisection of $X-\nu(K)$ is 0-annular since the normal Euler number of $K$ is 0. Thus, the monodromy of the open book decomposition is the identity, that is, $\alpha_1$ and $\gamma_1$ in Figure \ref{fig:after fifth destabilization} can be parallel. By destabilizing $\alpha_1$, $\beta_1$ and $\gamma_1$, we have a diagram $\mathcal{D}$.

The diagram $\mathcal{D}$ is obtained by attaching two disks to the two boundary components of the surface of $\mathcal{D}_{X-\nu(K)}$ since we surger along $\alpha_1$ when we destabilize $\alpha_1$, $\beta_1$ and $\gamma_1$ in Figure \ref{fig:after fifth destabilization}. In fact, $\mathcal{D}_{X-\nu(K)}$ is the diagram obtained by removing the open neighborhood of base points of the doubly pointed trisection diagram of $(X,K)$. Thus, $\mathcal{D}$ is the diagram obtained by simply deleting the base points. (Note that the surface erased the base points has no punctures.) In addition, the underlying trisection diagram of the doubly pointed trisection diagram of $(X,K)$ is the diagram of $T_{X}^{'}$. It can be seen from the way of boundary-stabilizations performed to construct a relative trisection diagram of $X-\nu(S)$ \cite{KM}. This means that $\mathcal{D}$ is just the diagram of $T_{X}^{'}$. Therefore, $T_S$ is diffeomorphic to a stabilization of $T_{X}^{'}$. Moreover, a meridionally stabilization of a bridge trisection corresponds to a stabilization for the underlying trisection. Thus, $T_{X}^{'}$ is a stabilization of $T_{X}$. This completes the proof of Theorem \ref{main theorem}.
\end{proof}


\begin{cor}\label{main corollary}
For each $P^2$-knot $S$ in $S^4$ that is of Kinoshita type, the trisection obtained by the trivial Price twist along $S$ is diffeomorphic to a stabilization of the genus 0 trisection of $S^4$.
\end{cor}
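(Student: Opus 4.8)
The plan is to derive Corollary~\ref{main corollary} as a direct specialization of Theorem~\ref{main theorem} to the case $X = S^4$, taking $T$ to be the genus $0$ trisection of $S^4$. All the real work is in verifying that a Kinoshita-type $P^2$-knot $S$ in $S^4$ actually satisfies the hypotheses of Theorem~\ref{main theorem}; once that is done, the conclusion is immediate.

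First I would unwind the definition of Kinoshita type to write $S = K \# P$, where $K$ is a $2$-knot and $P$ is an unknotted $P^2$-knot in $S^4$. The unknotted summand $P$ has normal Euler number $\pm 2$: by the Whitney--Massey theorem \cite{Ma, Wh} every $P^2$-knot in $S^4$ has normal Euler number $\pm 2$, and this applies in particular to $P$, matching the hypothesis on the $P^2$-summand in Theorem~\ref{main theorem}.

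The key point, which is the only substantive thing to check, is that the $2$-knot summand $K$ automatically has normal Euler number $0$ once the ambient manifold is $S^4$. Since $H_2(S^4) = 0$, every $2$-knot in $S^4$ is null-homologous; as the normal Euler number of an embedded orientable surface in an orientable $4$-manifold equals its self-intersection number $[K]\cdot[K]$, and $[K] = 0$, this number vanishes. Thus $K$ has normal Euler number $0$, exactly as required. I regard this homological observation as the crux of the corollary: it is what upgrades Theorem~\ref{main theorem}, which imposes the normal Euler number $0$ condition as a hypothesis, into an unconditional statement for Kinoshita-type knots in $S^4$.

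With these two facts in hand, $S = K \# P$ is precisely the connected sum of a $2$-knot with normal Euler number $0$ and an unknotted $P^2$-knot with normal Euler number $\pm 2$, so Theorem~\ref{main theorem} applies with $X = S^4$ and $T$ the genus $0$ trisection. By Theorem~\ref{thm:good bridge position} and the remark following it, $S$ can be placed in bridge position with respect to $T$, supplying the remaining hypothesis. The theorem then yields that $T_S$ is isotopic to a stabilization of $T'$, and by its final clause to a stabilization of $T$ itself; since $T$ is the genus $0$ trisection of $S^4$, this is exactly the assertion of Corollary~\ref{main corollary}.
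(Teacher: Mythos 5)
Your proposal is correct and follows the same route as the paper: Corollary~\ref{main corollary} is obtained by specializing Theorem~\ref{main theorem} to $X=S^4$ with $T$ the genus $0$ trisection (the paper's one-line proof cites Remark~\ref{rem:bridge trisection in $S^4$} for this). Your additional verifications---that the summand $K$ automatically has normal Euler number $0$ since $[K]=0$ in $H_2(S^4)$, that the unknotted $P^2$-summand has normal Euler number $\pm 2$ by Whitney--Massey, and that bridge position is supplied by Theorem~\ref{thm:good bridge position}---are exactly the hypotheses the paper leaves implicit, so your writeup is a more complete version of the same argument.
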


\begin{proof}
In Theorem \ref{main theorem}, if $X=S^4$, then $T_X$ is the genus 0 trisection of $S^4$ (see Remark \ref{rem:bridge trisection in $S^4$}).
\end{proof}

Lastly, as we have seen in Section \ref{sec:intro}, if any two diffeomorphic trisections of $S^4$ are isotopic, it follows from corollary \ref{main corollary} that the trisection obtained by the trivial Price twist along a $P^2$-knot which is of Kinoshita type is isotopic to a stabilization of the genus 0 trisection of $S^4$. Namely, Conjecture 3.11 in \cite{MSZ}, i.e. the conjecture that is a 4-dimansional analogue of Waldhausen's theorem on Heegaard splittings, is correct for this trisection.

\bibliographystyle{amsalpha}
\bibliography{math}

\end{document}